\documentclass[leqno,a4paper,12pt]{amsart}
\usepackage{amsmath,amsthm,amssymb,amsfonts,enumerate,color}
\allowdisplaybreaks[4]
\oddsidemargin = 9pt \evensidemargin = 9pt \textwidth = 440pt

\numberwithin{equation}{section}
\newtheorem{thm}{Theorem}[section]
\newtheorem{prop}[thm]{Proposition}
\newtheorem{lem}[thm]{Lemma}
\newtheorem{remark}[thm]{Remark}
\newtheorem{defn}[thm]{Definition}

\newcommand{\real}{{\mathbb R}}

\newcommand{\norm}[1]{\left\Vert#1\right\Vert}
\newcommand{\abs}[1]{\left\vert#1\right\vert}

\renewcommand{\L}{{\mathcal L}}

\begin{document}

\title[Lipschitz spaces related to biharmonic operator]
{Regularity property of some operators on  Lipschitz spaces and homogeneous Lipschitz spaces  related to biharmonic operator}

\author{Chao Zhang }

 \address{School of Statistics and Mathematics \\
             Zhejiang Gongshang University \\
             Hangzhou 310018, People's Republic of China}
 \email{zaoyangzhangchao@163.com}

\thanks{Supported by the National Natural Science Foundation of China(Grant Nos. 11971431, 11401525), the Natural Science Foundation of Zhejiang Province(Grant No. LY18A010006), and the first Class Discipline of Zhejiang-A(Zhejiang Gongshang University-Statistics)¡£}
 \subjclass[2010]{42B35, 46N20, 35B65}
\keywords{heat semigroups,   biharmonic operator,  Lipschitz  spaces, homogeneous Lipschitz spaces}

\begin{abstract}
In this paper, we consider the characterizations of the Lipschitz spaces and homogeneous Lipschitz spaces associated to the biharmonic operator $\Delta^2.$ With this characterizations, we prove the boundedness of the Bessel potentials, fractional integrals, fractional powers, Riesz transforms and multipliers of Laplace transforms type associated to $\Delta^2$ on Lipschitz spaces and homogeneous Lipschitz spaces.  The proofs of these results need the language of semigroups in an essential way.
\end{abstract}

\maketitle

\vskip 0.3cm

\section{Introduction and statement of the main results}

Classical Lipschitz spaces on $\real^n$, $\Lambda^\alpha,$ for $0< \alpha <1$,  are defined as the set of functions
			$\varphi$  such that  $ |\varphi(x+z)-\varphi(x)| \le C |z|^\alpha,$ $x,z\in\real^n$.
These spaces are of particular importance in Harmonic Analysis and PDE's.
It goes  back to A. Zygmund that for $\alpha =1$ the natural space is the set of functions such that
$ |\varphi(x+z)+\varphi(x-z)-2\varphi(x)  | \le C |z|$, $x,z\in\real^n$.
 For $\alpha>1$, $\Lambda^\alpha$ is defined as the class of smooth functions such that their first order derivatives belong to $\Lambda^{\alpha-1}.$
 Given the pointwise characterization of the above definitions,   the pointwise expression of the operator is needed to prove the boundedness of operators in the classes $\Lambda^\alpha$. However, it is relatively common to have definitions of operators in terms of the heat or the Poisson semigroup, for example, the Bessel potential, negative powers, fractional powers, and so on. Then, it should be desirable to have some equivalent definitions of the Lipschitz classes, which contains some expressions of the semigroups(not on points). As far as we know,  the first attempt  in this direction goes back to the Ph.D thesis of M. Taibleson,  see \cite{TaiblesonThesis}.
He characterized the   bounded Lipschitz functions via the Poisson semigroup, $e^{-t\sqrt{-\Delta}},$ and  the Gauss semigroup, $e^{t{\Delta}}$,  see \cite{Stein, Taibleson} also. The advantage of this approach is that, the semigroup language allows us to obtain regularity results in these spaces in a more direct way. In particular, it allows us to prove the boundedness of some fractional operators, such as fractional laplacians, fractional integrals, Riesz transforms and Bessel potentials, in a much more simple way than using the classical definition of the Lipschitz spaces.

Recently, some papers have been devoted to analyze the
 Lipschitz spaces adapted to different  ``laplacians'' and to find  pointwise
		and semigroup estimate characterizations, see \cite{ MartaT, dLC-T, Urbina, Sjogren, ST2}.  In the case of the Ornstein-Ulhenbeck operator $\mathcal{O}=-\frac{1}{2}\Delta+x\cdot\nabla$, in \cite{Urbina}, some Lipschitz classes were defined by means of its Poisson semigroup, $e^{-t\sqrt{\mathcal{O}}}$, and in \cite{Sjogren} a pointwise characterization was obtained for $0<\alpha<1$. In the literature sometimes  ``Lipschitz classes'' are also known as ``H\"older classes''.   In the case of the Hermite operator $\mathcal{H}=-\Delta+|x|^2$, adapted H\"older classes were defined pointwisely in \cite{ST2}.    By using semigroups these last classes were characterized in \cite{dLC-T}, also in the parabolic case.   The classical parabolic case was treated in   \cite{ST3}. In \cite{MartaT}, the authors proved the characterization of Lipschitz spaces adapted to the Schr\"odinger operators $-\Delta+V$, where $V$ is a nonnegative potential satisfying a reverse H\"older inequality.
		
		In this paper, we shall define the Lipschitz space  $\Lambda_{\alpha/4}^{\Delta^2}$,  associated to the biharmonic operator  $\Delta^2$, see Definition \ref{Lipdef1}. This  definition will allow us to prove some
regularity estimates for operators defined through the heat semigroups generated by $\Delta^2$, see Theorem \ref{thm:charclassic}.

All the above semigroup characterizations are given for special sets of Lipschitz functions, either  bounded, or polynomially bounded, see \cite{MartaT, Stein}. However this is not enough in our case. Even more, it is also not enough in the classical case. For example, it is well known that the Hilbert transform doesn't map bounded functions into bounded functions, see \cite{Duo, Grafakos}. And  the result developed  by Privalov  establishes that, if a function $f$ satisfies
\begin{equation}\label{Lipschitzcon}
\abs{f(x)-f(y)}\le C|x-y|^\alpha,
\end{equation}
then the conjugate function $\tilde f$ satisfies
\begin{equation*}
\abs{\tilde f(x)- \tilde f(y)}\le C|x-y|^\alpha,
\end{equation*}
for $0<\alpha<1,$ see \cite{Torchinsky, Zygmund}.
In other words if we define the norm

$$
\|f\|_{Lip^\alpha} = \sup_{x\neq y} \frac{\abs{f(x)-f(y)}}{|x-y|},
$$
then $$ \|\tilde{f}\|_{Lip^\alpha} \le C \| f\|_{Lip^\alpha}.
$$
But the conjugate operator $T:f\mapsto \tilde{f}$ is not well defined for functions on $L^\infty(\real^n)$.
The above situation can be also found in other scales of spaces in Harmonic Analysis, such as Besov spaces, Triebel-Lizorkin spaces and so on. The standard adjective used   for this type spaces  defined above is ``homogeneous" and the standard notation is to put a ``dot " over the ``non homogenous space". Motivated by these ideas, we will   consider the homogenous Lipschitz spaces, ${\dot\Lambda_{\alpha/4}}^{\Delta^2}$, see Definition \ref{defclasica}.  This will allow us to prove some regularity results for operators defined spectrally in $L^2(\real^n)$, but not defined in $L^\infty(\real^n)$  in general, see Theorems \ref{negativepower}-\ref{multiplicador}.

Consider the following Cauchy problem for the biharmonic heat equation
\begin{equation}\label{el.4}
\begin{cases}
(\partial_t+\Delta^2)u(x,t)=0 \ \ \  \text{in } \  \mathbb R_+^{n+1}\\
u(x,0)=f(x) \ \ \  \ \ \  \ \ \  \  \  \text{in } \  \mathbb R^{n}.
\end{cases}
\end{equation}
Its solution is  given by
$$u(x,t)=W_tf(x)=\int_{\real^n}W_t(x-y)f(y)dy,$$
where
$$W_t(x)=\mathfrak{F}^{-1}\left(e^{-\abs{\xi}^4t}\right)=t^{-{n/ 4}}g\left({x \over {t^{1/  4}}}\right)$$
 with
 \begin{equation*}\label{equg}
 g(\xi)=(2\pi)^{-{n/2}}\int_{\real^n}e^{i\xi \eta-\abs{\eta}^4}d\eta=\alpha_n\abs{\xi}^{1-n}\int_0^\infty e^{-s^4}(\abs{\xi} s)^{n/2}J_{(n-2)/2}(\abs{\xi} s)ds,\ \ \ \xi\in \real^n,
 \end{equation*}
 and $\mathfrak{F}^{-1}$ being the inverse Fourier transform.
 Here, $J_v$ denotes the $v$-th Bessel function and $\alpha_n>0$ is a normalization constant such that
 \begin{equation*}
 \int_{\real^n}g(\xi)d\xi=1.
 \end{equation*}
 See \cite{KochL, WangCY} for more details.
Then, we have the following several results by classical analysis(for details, see \cite{SW}):
 \begin{itemize}
\item[(1)]
 If $f\in L^p(\real^n)$, $1\le p\le \infty,$ then $$\displaystyle \lim_{t\rightarrow 0}u(x,t)=f(x)\quad  a.e. \quad  x\in \real^n,$$ and
\begin{equation*}
\norm{u(\cdot, t)}_{L^p(\real^n)}\le C\norm{f}_{L^p(\real^n)}.
\end{equation*}
\item [(2)] If $1\le p<\infty,$ then
\begin{equation*}
\norm{u(\cdot, t)-f}_{L^p(\real^n)}\rightarrow 0, \quad  \hbox{when} \quad t\rightarrow 0^+.
\end{equation*}
\end{itemize}

Now, we define  the  spaces $\Lambda^{\Delta^2}_{\alpha/4}$  in the following.

\begin{defn}\label{Lipdef1} Let $\alpha >0.$ We define the spaces $\Lambda^{\Delta^2}_{\alpha/4}$  as
\begin{align*}\Lambda_{\alpha/4}^{\Delta^2}=&\Big\{ f \in  L^\infty(\real^n)
	:\;
	 \left\|\partial_t^k	{W}_t f \right\|_{L^\infty(\mathbb{R}^{n})}\leq C_\alpha t^{-k+\alpha/4},\:  k=[\alpha/4]+1  \Big\}.\end{align*}
	We  endow this space with the norm $$\|f\|_{\Lambda_{\alpha/4}^{\Delta^2}} := \norm{f}_{\infty} + {S}_\alpha[f],$$
where ${S}_\alpha[f]$  is the infimum of the constants $C_\alpha$ appearing above.
\end{defn}

And, we have the following characterization theorem.

 \begin{thm}\label{thm:charclassic}
Let $0<\alpha<2$. Then the following three statements are equivalent:
\begin{itemize}
	\item [(1)] $f\in \Lambda_{\alpha/4}^{\Delta^2}$.
\item [(2)]
$f\in \Big\{ f \in  L^\infty(\real^n)
	:	 \left\|\partial_t^k	{e}^{-t\sqrt{-\Delta} }f \right\|_{L^\infty(\mathbb{R}^{n})}\leq C_\alpha t^{-k+\alpha},\:  k=[\alpha]+1  \Big\}$,
where ${e}^{-t\sqrt{-\Delta} }$ is the classical Poisson kernel.
\item [(3)] $\displaystyle f\in \Big\{ f \in  L^\infty(\real^n)
	:	 N_\alpha[f]:= \sup_{|y|>0}\frac{\|f(\cdot+y)+f(\cdot-y)-2f(\cdot)\|_\infty}{|y|^\alpha}<\infty\}$.
\end{itemize}
Moreover, $$\norm{f}_{\Lambda_{\alpha/4}^{\Delta^2}}\sim \norm{f}_{L^\infty}+\tilde S_\alpha[f]\sim \norm{f}_{L^\infty}+ N_\alpha[f],$$ where $\tilde S_\alpha[f]$ denotes the infimum of the constants $C_\alpha$  appearing in $(2)$.
 \end{thm}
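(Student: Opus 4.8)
The plan is to prove the chain of equivalences by closing a cycle $(3)\Rightarrow(1)\Rightarrow(2)\Rightarrow(3)$, exploiting the fact that the classical Zygmund–Lipschitz space described in $(3)$ is already known (Stein, Taibleson) to be characterized by Poisson semigroup estimates exactly as in $(2)$, so really the new content is the equivalence of $(1)$ with the classical picture. For $(3)\Rightarrow(1)$, I would start from the representation $W_t f(x)=\int_{\real^n}W_t(y)f(x-y)\,dy$ and use that $\int W_t = 1$ together with the symmetry $W_t(y)=W_t(-y)$ (which follows from $g$ being even, since $g$ is a radial function of $\abs{\xi}$). Writing $k=[\alpha/4]+1$, so $k=1$ when $0<\alpha<4$ and in our range $0<\alpha<2$ one has $k=1$, I differentiate under the integral sign: $\partial_t W_t f(x)=\int \partial_t W_t(y)\, f(x-y)\,dy$. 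Since $\int \partial_t W_t(y)\,dy=\partial_t\!\int W_t=0$ and also $\int y\,\partial_t W_t(y)\,dy=0$ by oddness, I can insert the second difference and write
\begin{equation*}
\partial_t W_t f(x)=\tfrac12\int_{\real^n}\partial_t W_t(y)\,\big(f(x+y)+f(x-y)-2f(x)\big)\,dy.
\end{equation*}
Then $\abs{\partial_t W_t f(x)}\le \tfrac12 N_\alpha[f]\int \abs{\partial_t W_t(y)}\,\abs{y}^\alpha\,dy$, and a scaling change of variables $y=t^{1/4}\xi$ turns the remaining integral into $C\,t^{-1+\alpha/4}\int \abs{(\partial_s(s^{-n/4}g(\cdot)))}$-type quantity; the needed finiteness is the Gaussian-type decay of $g$ and its derivatives, which I would quote from \cite{KochL, WangCY}. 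This gives the bound $\norm{\partial_t W_t f}_\infty\le C\,N_\alpha[f]\,t^{-1+\alpha/4}$, i.e. $S_\alpha[f]\lesssim N_\alpha[f]$, and $\norm f_\infty$ is controlled trivially, so $(3)\Rightarrow(1)$ with norm control.

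For the harder implications $(1)\Rightarrow(2)$ and $(2)\Rightarrow(3)$, the natural tool is subordination: the Poisson semigroup $e^{-t\sqrt{-\Delta}}$ and the biharmonic heat semigroup $W_t=e^{-t\Delta^2}$ are both built from the same spectral resolution of $-\Delta$. I would use the subordination-type formula expressing one in terms of the other — concretely, write $e^{-t\sqrt{-\Delta}}$ via the classical formula $e^{-t\sqrt{-\Delta}}=\int_0^\infty \phi_t(s)\,e^{s\Delta}\,ds$ with $\phi_t(s)=\tfrac{t}{2\sqrt\pi}s^{-3/2}e^{-t^2/(4s)}$, and $e^{s\Delta}$ in turn as a superposition of $W_r=e^{-r\Delta^2}$ (via $e^{-s\lambda^2}=\int_0^\infty \psi_s(r)e^{-r\lambda^4}\,dr$ with $\psi_s$ again a one-sided stable–type density in $r$), so that
\begin{equation*}
e^{-t\sqrt{-\Delta}}f=\int_0^\infty\!\!\int_0^\infty \phi_t(s)\,\psi_s(r)\,W_r f\,dr\,ds.
\end{equation*}
Differentiating $k=[\alpha]+1$ times in $t$, estimating $\partial_t^k$ of the kernel $\phi_t$, inserting the hypothesis $\norm{\partial_r W_r f}_\infty\le C t^{-1+\alpha/4}$ (together with $\norm{W_r f}_\infty\le\norm f_\infty$ to handle the regime $r$ large), and carrying out the two $r$- and $s$-integrals with the scaling bookkeeping should produce exactly $\norm{\partial_t^k e^{-t\sqrt{-\Delta}}f}_\infty\le C t^{-k+\alpha}$. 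An alternative, perhaps cleaner, route that avoids the double integral is to prove $(1)\Leftrightarrow(3)$ directly by also establishing $(1)\Rightarrow(3)$: from $\norm{\partial_t W_t f}_\infty\le C t^{-1+\alpha/4}$ one recovers the pointwise second-difference bound by integrating back, $f-W_t f=-\int_0^t\partial_s W_s f\,ds$, controlling $\norm{f-W_t f}_\infty\le C t^{\alpha/4}$, and then comparing the second difference of $f$ with that of $W_t f$ at scale $t\sim\abs y^4$, using that $W_t$ is an approximation of the identity with good derivative bounds so that $\norm{\partial_y^2 W_t f}_\infty\lesssim t^{-1/2}\cdot(\text{derivative bound})$ — this is the standard Taibleson-type argument and it localizes the whole problem to estimates on $g$. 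I would then invoke the classical equivalence $(2)\Leftrightarrow(3)$ from \cite{Stein, Taibleson} to close the cycle.

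The main obstacle I anticipate is the bookkeeping in the subordination step (or, in the alternative route, the second-difference comparison): one must track that the hypothesis in $(1)$ is stated only for the single derivative order $k=[\alpha/4]+1$ while $(2)$ demands order $[\alpha]+1$, which for $0<\alpha<2$ means order $1$ versus order up to $3$, so one needs to first upgrade $(1)$ to control $\partial_t^j W_t f$ for all $j\ge k$ — this follows by the semigroup property, writing $\partial_t^{j}W_t f=\partial_t^{j-k}\big(\partial_t^k W_{t/2}\big)W_{t/2}f$ and using that $\partial_t^m W_s$ has operator norm $O(s^{-m})$ on $L^\infty$ — and similarly one must check the endpoint/low-order differences don't blow up. The convergence results (1)–(2) quoted after \eqref{el.4}, the explicit Gaussian-type bounds on $g$ and its derivatives from \cite{KochL, WangCY}, and the evenness/radial structure of $W_t$ are the facts I would lean on throughout; none of these individually is hard, but assembling them so that every scaling exponent matches $t^{-k+\alpha}$ exactly is where the care is needed. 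I would present $(3)\Rightarrow(1)$ in full detail, sketch $(1)\Rightarrow(3)$ via the $f-W_tf$ argument, and reduce $(2)$ to $(3)$ by citation, thereby obtaining all three equivalences together with the stated norm comparisons $\norm f_{\Lambda}\sim\norm f_\infty+\tilde S_\alpha[f]\sim\norm f_\infty+N_\alpha[f]$.
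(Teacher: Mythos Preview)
Your overall strategy matches the paper's: close the cycle $(3)\Rightarrow(1)\Rightarrow(2)\Rightarrow(3)$, with $(2)\Leftrightarrow(3)$ quoted from Stein. Your $(3)\Rightarrow(1)$ argument is exactly the paper's---use the radial symmetry of $W_t$ and $\int\partial_t W_t=0$ to insert the second difference, then bound $\int|\partial_t W_t(y)|\,|y|^\alpha\,dy$ via the kernel decay (the paper cites its Lemma~\ref{le2.2} rather than \cite{KochL,WangCY} directly, but it is the same content).

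Where you diverge is in $(1)\Rightarrow(2)$. The paper also uses subordination, but in a specific form that dissolves your ``main obstacle'': it writes $e^{-t\sqrt{-\Delta}}=e^{-t\sqrt{\sqrt{\Delta^2}}}$ and applies Bochner's formula \emph{twice}, once for each square root, to obtain
\[
e^{-t\sqrt{-\Delta}}f=\frac{1}{2\pi}\int_0^\infty\frac{te^{-t^2/(4\tau)}}{\tau^{3/2}}\int_0^\infty\frac{e^{-u}}{u^{1/2}}\,W_{\tau^2/(4u)}f\,du\,d\tau,
\]
and then exploits the algebraic identity $\partial_t^2\bigl(te^{-t^2/(4\tau)}/\tau^{3/2}\bigr)=\partial_\tau\bigl(te^{-t^2/(4\tau)}/\tau^{3/2}\bigr)$ to convert the two $t$-derivatives into one $\tau$-derivative, integrate by parts, and land exactly one $s$-derivative on $W_s f$ at $s=\tau^2/(4u)$---precisely the quantity controlled by hypothesis $(1)$. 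No upgrading of derivative orders, no stable-density estimates, no bookkeeping: the scaling falls out in two lines. Your route through the intermediate heat semigroup $e^{s\Delta}$ would also work but is heavier; your alternative direct $(1)\Rightarrow(3)$ route is in fact what the paper carries out later (proof of Theorem~\ref{nuevoStein}) for the homogeneous space, so it is a legitimate substitute here too. One small slip: for $0<\alpha<2$ the order $[\alpha]+1$ is $1$ or $2$, never $3$; the paper simply proves the $k=2$ Poisson bound and invokes \cite[Lemma~5, p.~145]{Stein} to cover $k=[\alpha]+1$.
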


We shall prove the regularity property of the Bessel potential of order $\beta>0$  associated to $\Delta^2$.  Its definition is motivated by the Gamma formula, $$
	 (Id+\Delta^2)^{-\beta/4}f(x)=\frac{1}{\Gamma(\beta/4)}\int_0^\infty e^{-s}e^{-s\Delta^2}f(x) s^{\beta/4}\frac{ds}{s},
	  $$
see  \cite{ST}.
In fact, we have the following theorem.
	  \begin{thm}\label{Schau}
	Let $\alpha,\beta>0$. Then, $(Id+\Delta^2)^{-\beta/4}$  satisfies
\begin{itemize}
\item[(i)]
		$\|(Id+\Delta^2)^{-\beta/4} f\|_{\Lambda_{\frac{\alpha+\beta}{4}}^{\Delta^2}}\le C \|f\|_{	 \Lambda_{\alpha/4}^{\Delta^2}},$
\item[(ii)] $\|(Id+\Delta^2)^{-\beta/4} f\|_{\Lambda_{{\beta/4}}^{\Delta^2}}\le C \|f\|_{\infty}.$		
		\end{itemize}
	  \end{thm}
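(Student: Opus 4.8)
The plan is to estimate $\partial_t^k W_t \bigl((Id+\Delta^2)^{-\beta/4} f\bigr)$ using the subordination formula and the semigroup property, and then invoke Definition \ref{Lipdef1} (for part (i) also in combination with the hypothesis, for part (ii) starting only from $\|f\|_\infty$). Write $k=[(\alpha+\beta)/4]+1$ in case (i) and $k=[\beta/4]+1$ in case (ii). The starting point is the identity
\begin{equation*}
\partial_t^k W_t\bigl((Id+\Delta^2)^{-\beta/4}f\bigr)(x)=\frac{1}{\Gamma(\beta/4)}\int_0^\infty e^{-s}\,\partial_t^k W_{t+s}f(x)\,s^{\beta/4}\frac{ds}{s},
\end{equation*}
which follows by commuting $\partial_t^k W_t$ with the integral defining $(Id+\Delta^2)^{-\beta/4}$ and using $e^{-s\Delta^2}W_t=W_{t+s}$; the differentiation under the integral sign is justified by the rapid decay of $e^{-s}$ together with the standard bound $\|\partial_t^j W_t h\|_\infty\le C t^{-j}\|h\|_\infty$ for $h\in L^\infty$.

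For part (ii), I would simply estimate $\|\partial_t^k W_{t+s}f\|_\infty\le C(t+s)^{-k}\|f\|_\infty$ and split the $s$-integral at $s=t$. On $\{s\le t\}$ one uses $(t+s)^{-k}\le t^{-k}$ and $\int_0^t s^{\beta/4}\frac{ds}{s}=\tfrac{4}{\beta}t^{\beta/4}$, giving a contribution $\lesssim t^{-k+\beta/4}\|f\|_\infty$. On $\{s\ge t\}$ one uses $(t+s)^{-k}\le s^{-k}$ and $\int_t^\infty s^{\beta/4-k}\frac{ds}{s}\lesssim t^{\beta/4-k}$ (the integral converges at infinity precisely because $k=[\beta/4]+1>\beta/4$), again yielding $\lesssim t^{-k+\beta/4}\|f\|_\infty$. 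Adding the two pieces and noting $\|(Id+\Delta^2)^{-\beta/4}f\|_\infty\le C\|f\|_\infty$ (take $t\to\infty$ or bound the Gamma integral directly) gives the claim.

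For part (i), the difference is that now I want to exploit the assumption $f\in\Lambda^{\Delta^2}_{\alpha/4}$, i.e. $\|\partial_t^{k_0}W_t f\|_\infty\le C_\alpha t^{-k_0+\alpha/4}$ with $k_0=[\alpha/4]+1$, and transfer the regularity of order $\alpha/4$ up to order $(\alpha+\beta)/4$. The natural device is to relate $\partial_t^k W_{t+s}f$ to $\partial_t^{k_0}W_{(t+s)/2}f$ via the semigroup identity $\partial_t^k W_{t+s}=W_{(t+s)/2}\,\partial_t^{k-k_0}\bigl(\partial_t^{k_0}W_{(t+s)/2}\bigr)$ — more precisely, one writes $\partial_t^k W_{t+s}f=\partial_t^{k-k_0}W_{(t+s)/2}\bigl(\partial_t^{k_0}W_{(t+s)/2}f\bigr)$ and then applies $\|\partial_t^{k-k_0}W_{(t+s)/2}h\|_\infty\le C(t+s)^{-(k-k_0)}\|h\|_\infty$ with $h=\partial_t^{k_0}W_{(t+s)/2}f$. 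This yields $\|\partial_t^k W_{t+s}f\|_\infty\le C(t+s)^{-(k-k_0)}(t+s)^{-k_0+\alpha/4}=C(t+s)^{-k+\alpha/4}$. Plugging this into the subordination integral and splitting at $s=t$ exactly as before, the $\{s\le t\}$ piece gives $\lesssim t^{-k+\alpha/4}\int_0^t s^{\beta/4}\frac{ds}{s}\lesssim t^{-k+(\alpha+\beta)/4}$, and the $\{s\ge t\}$ piece gives $\lesssim \int_t^\infty s^{-k+\alpha/4+\beta/4}\frac{ds}{s}\lesssim t^{-k+(\alpha+\beta)/4}$, where convergence at infinity uses $k=[(\alpha+\beta)/4]+1>(\alpha+\beta)/4$. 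Combining with $\|(Id+\Delta^2)^{-\beta/4}f\|_\infty\le C\|f\|_\infty\le C\|f\|_{\Lambda^{\Delta^2}_{\alpha/4}}$ finishes part (i).

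The main obstacle I anticipate is bookkeeping rather than conceptual: one must make sure the exponents $k$, $k_0$, $k-k_0$ are all chosen so that (a) the bound $\|\partial_t^j W_t h\|_\infty\le Ct^{-j}\|h\|_\infty$ is applied only with a nonnegative integer $j$, and (b) the tail integral $\int_t^\infty s^{\gamma}\frac{ds}{s}$ converges, i.e. the relevant exponent $\gamma$ is negative — this is where the ceiling in $k=[\cdot]+1$ is used and must be checked in the boundary cases where $\alpha/4$ or $(\alpha+\beta)/4$ is an integer. A secondary technical point is the rigorous justification of differentiating under the integral sign and of the semigroup identities $e^{-s\Delta^2}W_t=W_{t+s}$ and $W_{a+b}=W_a W_b$ acting on $L^\infty$ functions, which follows from Fourier-multiplier considerations or from the explicit kernel $W_t(x)=t^{-n/4}g(x/t^{1/4})$ together with the $L^1$ bounds on $g$ and its derivatives implicit in the discussion preceding Definition \ref{Lipdef1}.
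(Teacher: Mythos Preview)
Your proposal is correct and follows essentially the same route as the paper: the subordination formula, the semigroup identity $W_tW_s=W_{t+s}$, and the bound $\|\partial_t^\ell W_{t+s}f\|_\infty\le C\,S_\alpha[f]\,(t+s)^{-\ell+\alpha/4}$ for $\ell\ge[\alpha/4]+1$ (which is exactly Proposition~\ref{subiraK}, and which you rederive inline via the splitting $W_{t+s}=W_{(t+s)/2}W_{(t+s)/2}$), followed by an elementary estimate of $\int_0^\infty e^{-s}(t+s)^{-\ell+\alpha/4}s^{\beta/4}\,\frac{ds}{s}$. The only cosmetic difference is that the paper handles this last integral by the single change of variables $u=s/t$ rather than by splitting at $s=t$; your worry about integer values of $\alpha/4$ or $(\alpha+\beta)/4$ is unfounded, since $k=[\gamma]+1>\gamma$ holds for every real $\gamma$.
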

	\vskip 0.3cm

As we have said previously, operators defined by using the heat semigroups are well defined in $L^2(\real^n)$, but not in $L^\infty(\real^n)$, unless the heat kernel has good enough property.  So, it seems natural to study the following type Lipschitz space.

\begin{defn}[Homogeneous Lipschitz spaces associated to $\Delta^2$]\label{defclasica} Let $\alpha >0.$ We define the semi-norm spaces $\dot{\Lambda}_{\alpha/4}^{{\Delta^2}}$  as
\begin{align*}\dot{\Lambda}_{\alpha/4}^{{\Delta^2}}=&\Big\{f
	:\;
	 \left\|\partial_t^k	{W}_t f \right\|_{L^\infty(\mathbb{R}^{n})}\leq C_\alpha t^{-k+\alpha/4},\:  k=[\alpha/4]+1  \Big\}.\end{align*}
	We  endow this class with the semi-norm $$\|f\|_{\dot{\Lambda}_{\alpha/4}^{{\Delta^2}}} = {S}_\alpha[f].$$
\end{defn}

Of course, in the definition of the space $\dot{\Lambda}_{\alpha/4}^{{\Delta^2}}$, we should assume that $\partial_t^k W_tf$ is well defined, but not necessarily $f\in L^\infty(\real^n)$. However, for certain classes of functions, these spaces $\dot{\Lambda}_{\alpha/4}^{{\Delta^2}}$ contain the functions which satisfy a pointwise inequality of the type \eqref{Lipschitzcon}. In fact, we have the following theorem.

\begin{thm}\label{nuevoStein}
				Let $0<\alpha < 2$,  and let $f$ be functions such that $\partial_t W_t f$ is well defined and $\displaystyle \lim_{t\rightarrow 0^+}W_tf(x)=f(x)$ $a.e. \ x\in \real^n$.  Then the following two statements are equivalent:
\begin{itemize}
	\item [(1)] $f\in \dot{\Lambda}_{\alpha/4}^{{\Delta^2}}$.
    \item [(2)] $\displaystyle f\in \{f: N_\alpha[f]  <\infty \}$.
\end{itemize}
				Moreover, $$\|f \|_{\dot{\Lambda}_{\alpha/4}^{{\Delta^2}} }  \sim N_\alpha[f]. $$
			\end{thm}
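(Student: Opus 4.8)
The plan is to establish the two implications $(1)\Rightarrow(2)$ and $(2)\Rightarrow(1)$, with the homogeneous estimate $\|f\|_{\dot\Lambda_{\alpha/4}^{\Delta^2}}\sim N_\alpha[f]$ falling out of the quantitative bounds. Since $0<\alpha<2$, we have $k=[\alpha/4]+1=1$, so everything is phrased in terms of the single derivative $\partial_t W_tf$. The overarching strategy is to import the techniques already used for the non-homogeneous characterization in Theorem \ref{thm:charclassic}, but to be careful that all estimates use only the seminorm quantity $N_\alpha[f]$ (the second difference) and the bound on $\partial_tW_tf$, never $\|f\|_\infty$; the hypotheses that $\partial_tW_tf$ is well defined and $W_tf\to f$ a.e.\ are exactly what is needed to make the passage to pointwise inequalities legitimate without an $L^\infty$ assumption.

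For $(2)\Rightarrow(1)$, I would start from the identity
\begin{equation*}
\partial_t W_tf(x)=\int_{\real^n}\partial_tW_t(y)f(x-y)\,dy,
\end{equation*}
and exploit that $\int_{\real^n}\partial_tW_t(y)\,dy=\partial_t\!\int_{\real^n}W_t(y)\,dy=0$ and that $\partial_tW_t$ is even in $y$ (since $g$ and hence $W_t$ is radial). The evenness lets me symmetrize:
\begin{equation*}
\partial_tW_tf(x)=\frac12\int_{\real^n}\partial_tW_t(y)\big(f(x-y)+f(x+y)-2f(x)\big)\,dy,
\end{equation*}
so that $\|\partial_tW_tf\|_\infty\le \frac12 N_\alpha[f]\int_{\real^n}|\partial_tW_t(y)|\,|y|^\alpha\,dy$. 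Then I need the kernel estimate: from $W_t(x)=t^{-n/4}g(x/t^{1/4})$ one gets $\partial_tW_t(x)=t^{-n/4-1}G(x/t^{1/4})$ for a Schwartz-type function $G$ with fast decay (this is where one invokes the regularity and decay of $g$ from \cite{KochL, WangCY}), and a change of variables gives $\int_{\real^n}|\partial_tW_t(y)||y|^\alpha\,dy=C\,t^{\alpha/4-1}$. This yields $\|\partial_tW_tf\|_\infty\le C\,N_\alpha[f]\,t^{-1+\alpha/4}$, which is precisely membership in $\dot\Lambda_{\alpha/4}^{\Delta^2}$ with $\|f\|_{\dot\Lambda_{\alpha/4}^{\Delta^2}}\le C\,N_\alpha[f]$.

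For $(1)\Rightarrow(2)$, I would run the standard reconstruction argument: using $\lim_{t\to0^+}W_tf=f$ a.e.\ together with the decay $\|\partial_tW_tf\|_\infty\le C_\alpha t^{-1+\alpha/4}$ (with $\alpha<2$ so the integral $\int_0^T t^{-1+\alpha/4}dt$ converges at $0$), write for a.e.\ $x$
\begin{equation*}
f(x)=W_Tf(x)-\int_0^T\partial_tW_tf(x)\,dt,
\end{equation*}
and hence express the second difference $f(x+y)+f(x-y)-2f(x)$ as $\big(W_Tf(x+y)+W_Tf(x-y)-2W_Tf(x)\big)-\int_0^T\big(\partial_tW_tf(x+y)+\partial_tW_tf(x-y)-2\partial_tW_tf(x)\big)dt$. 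Choosing $T=|y|^4$, I would bound the second-difference of $W_Tf$ by controlling $\|\nabla^2 W_Tf\|_\infty$ via $\|W_{T/2}(\partial^2_{x}W_{T/2})\|_{L^1}$-type kernel bounds (giving $T^{-1/2}$ gain) times the already-known decay of the first time-derivative integrated over $(T/2,T)$, and similarly for the integral term split at $t=|y|^4$ — on $(0,|y|^4)$ using the crude estimate $\|\partial_tW_tf\|_\infty\le C_\alpha t^{-1+\alpha/4}$ and on $(|y|^4,\infty)$ using the extra two spatial derivatives to extract $|y|^2\cdot t^{-1/2-1/2}$ smallness. The bookkeeping gives $\|f(\cdot+y)+f(\cdot-y)-2f(\cdot)\|_\infty\le C\,S_\alpha[f]\,|y|^\alpha$, so $N_\alpha[f]\le C\|f\|_{\dot\Lambda_{\alpha/4}^{\Delta^2}}$.

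The main obstacle I anticipate is the honest bookkeeping in $(1)\Rightarrow(2)$: controlling the second spatial difference of $W_tf$ and of $\partial_tW_tf$ uniformly in a way that integrates correctly near $t=0$ requires converting the single-derivative-in-$t$ hypothesis into spatial-derivative bounds. The clean way is the semigroup reproduction trick $\partial_tW_t=\partial_{t/2}W_{t/2}\circ W_{t/2}$ combined with the kernel bound $\|\partial_y^\beta W_{t/2}\|_{L^1(\real^n)}\le C t^{-|\beta|/4}$, trading each spatial derivative for a factor $t^{-1/4}$; together with the $t$-decay of $\partial_tW_tf$ this closes all the integrals for $0<\alpha<2$. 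A secondary subtlety is justifying the representation $f=W_Tf-\int_0^T\partial_tW_tf\,dt$ only in the a.e./seminorm sense permitted by the hypotheses — but this is exactly what the assumptions "$\partial_tW_tf$ well defined" and "$W_tf\to f$ a.e." are tailored to provide, so no $L^\infty$ control of $f$ itself is ever needed.
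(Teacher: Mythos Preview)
Your approach is essentially the paper's: the $(2)\Rightarrow(1)$ direction is identical (symmetrize via the evenness of $\partial_tW_t$ and its vanishing integral, then apply the kernel decay), and for $(1)\Rightarrow(2)$ the paper also writes $W_tf-f=\int_0^t\partial_uW_uf\,du$, bounds the second difference of $W_tf$ via two mean-value steps and the estimate $\|\nabla^2W_tf\|_\infty\le C\,S_\alpha[f]\,t^{-1/2+\alpha/4}$ from Lemma~\ref{derivX}, and sets $t=|y|^4$. One simplification over your sketch: the paper does \emph{not} second-difference the integral term or split it; it just bounds each of $|W_tf(x\pm y)-f(x\pm y)|$ and $2|W_tf(x)-f(x)|$ individually by $C\,S_\alpha[f]\,t^{\alpha/4}$ and sums, so your talk of splitting the integral at $|y|^4$ and treating a piece on $(|y|^4,\infty)$ (inconsistent with having chosen $T=|y|^4$) is unnecessary.
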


By the  theorem above and Proposition 3.28 in \cite{MartaT}, we know that, when		
		 $0<\alpha < 2$, if  $f \in \dot \Lambda^{{\Delta^2}}_{\alpha/4}$, then $\displaystyle \sup_{|y|>0} \frac{\|f(\cdot-y)-f(\cdot) \|_\infty}{|y|^\alpha} < \infty.$

Also, we will prove a parallel result to \cite[Proposition 9 in pp. 147]{Stein}, which states the relationship between the Lipschitz function and its derivatives.
\begin{thm}\label{thm:LipDirivative}
Suppose that $\alpha>1$.  Then,
$$f\in \dot \Lambda^{\Delta^2}_{\alpha/4}\quad \hbox{if, and only if},\quad \partial_{x_i}f\in \dot \Lambda^{\Delta^2}_{(\alpha-1)/4},\ i=1,\cdots, n. $$
In this case, the following equivalence holds
$$\norm{f}_{\dot \Lambda^{\Delta^2}_{\alpha/4}}\sim \sum_{i=1}^{n}\norm{\partial_{x_i}f}_{\dot \Lambda^{\Delta^2}_{(\alpha-1)/4}}.$$
\end{thm}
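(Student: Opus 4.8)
The plan is to prove the two implications separately, using in an essential way the characterization of $\dot\Lambda^{\Delta^2}_{\alpha/4}$ by the second difference (Theorem \ref{nuevoStein}) for the range $0<\alpha<2$, together with the intrinsic semigroup definition for larger $\alpha$, and the fact that $\partial_{x_i}$ commutes with $W_t$. The central observation is that $\partial_{x_i}W_tf = W_t(\partial_{x_i}f)$ whenever the derivative makes sense, so that $\partial_t^k W_t(\partial_{x_i}f) = \partial_{x_i}(\partial_t^k W_t f)$; the whole argument reduces to transferring $L^\infty$ bounds between $\partial_t^k W_t f$ and its spatial gradient.

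First I would prove the ``only if'' direction. Assume $f\in\dot\Lambda^{\Delta^2}_{\alpha/4}$ with $\alpha>1$, and set $k=[\alpha/4]+1$. I want to bound $\|\partial_t^k W_t(\partial_{x_i}f)\|_\infty = \|\partial_{x_i}\partial_t^k W_t f\|_\infty$ by $C t^{-k+(\alpha-1)/4}$. The idea is a standard semigroup factorization: write $\partial_t^k W_t f = W_{t/2}\,(\partial_t^k W_{t/2} f)$, so that $\partial_{x_i}\partial_t^k W_t f = (\partial_{x_i}W_{t/2})\ast(\partial_t^k W_{t/2}f)$. Using the kernel bound $\|\partial_{x_i}W_s\|_{L^1(\real^n)} \le C s^{-1/4}$, which follows from the self-similar form $W_s(x)=s^{-n/4}g(x/s^{1/4})$ and the decay of $g$ and $\nabla g$ recorded after \eqref{el.4}, together with the hypothesis $\|\partial_t^k W_{t/2}f\|_\infty \le C t^{-k+\alpha/4}$, Young's inequality gives $\|\partial_{x_i}\partial_t^k W_t f\|_\infty \le C t^{-1/4}\,t^{-k+\alpha/4} = C t^{-k+(\alpha-1)/4}$. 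Since $[\alpha/4]=[(\alpha-1)/4]$ when $\alpha>1$ and $\alpha-1$ is not a multiple of $4$ (and the boundary cases are handled by the usual reduction of the order of the $t$-derivative via the semigroup property, as in the proof of Theorem \ref{nuevoStein}), this is exactly the bound defining $\partial_{x_i}f\in\dot\Lambda^{\Delta^2}_{(\alpha-1)/4}$, and it gives $\sum_i\|\partial_{x_i}f\|_{\dot\Lambda^{\Delta^2}_{(\alpha-1)/4}}\le C\|f\|_{\dot\Lambda^{\Delta^2}_{\alpha/4}}$.

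For the ``if'' direction, assume $\partial_{x_i}f\in\dot\Lambda^{\Delta^2}_{(\alpha-1)/4}$ for all $i$. Now the gradient must be integrated, which is the delicate point: I need to recover a bound on $\partial_t^k W_t f$ from bounds on $\partial_t^k W_t(\partial_{x_i}f)=\partial_{x_i}\partial_t^k W_t f$. The natural route is again a semigroup factorization, writing $\partial_t^k W_t f = (\partial_t^k W_{t/2})\ast W_{t/2}f$ and noting that for a smooth rapidly decaying kernel one can write $\partial_t^k W_{t/2}(x) = \sum_i \partial_{x_i}\Phi_{t/2}^{(i)}(x)$ with $\|\Phi_{t/2}^{(i)}\|_{L^1}\le C t^{k-1/4}\cdot t^{-k}\cdot(\text{scaling})$; more precisely, by self-similarity $\partial_t^k W_s$ is $s^{-k}$ times a fixed Schwartz function evaluated at $x/s^{1/4}$, whose zero integral (for $k\ge1$) lets it be written as a divergence of an $L^1$ field of size $\sim s^{-k+1/4}$. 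Then $\partial_t^k W_t f = \sum_i \Phi_{t/2}^{(i)}\ast \partial_{x_i}W_{t/2}f = \sum_i \Phi_{t/2}^{(i)}\ast\partial_t^{\,0}\!\big(\text{something}\big)$ — cleaner is to pair the divergence structure with $\partial_{x_i}f$ directly: $\partial_t^k W_t f = \sum_i \Phi_{t/2}^{(i)}\ast W_{t/2}(\partial_{x_i}f)$ after an integration by parts valid under the standing assumption that $\partial_t W_tf$ is well defined and $W_tf\to f$ a.e. Applying $\partial_t^{k-1}$ appropriately and using $\|W_{t/2}\partial_t^{k}(\cdot)\|$ bounds from the hypothesis on each $\partial_{x_i}f$, Young's inequality yields $\|\partial_t^k W_t f\|_\infty \le \sum_i C t^{-k+1/4}\cdot t^{-(k-1)+\cdots}$; tracking the exponents gives $C t^{-k+\alpha/4}$, which is the definition of $f\in\dot\Lambda^{\Delta^2}_{\alpha/4}$, and simultaneously the reverse norm inequality.

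The main obstacle is the ``if'' direction: producing the divergence-form decomposition $\partial_t^k W_s = \operatorname{div}\Phi_s$ with the correct $L^1$ scaling and then matching the powers of $t$ so that the two applications of Young's inequality combine to exactly $t^{-k+\alpha/4}$ — in particular being careful at the integer thresholds where $[\alpha/4]$ jumps and where one must first lower the order of differentiation using the semigroup property (as is done in Theorem \ref{nuevoStein}) before applying the factorization. The kernel estimates $\|\partial_{x_i}W_s\|_{L^1}\le Cs^{-1/4}$ and $\|\partial_t^k W_s\|_{L^1}\le Cs^{-k}$, and the divergence representation, all follow routinely from the explicit self-similar formula for $W_s$ and the Schwartz-class nature of $g$, so I would state these as a preliminary lemma and then run the two Young's-inequality arguments above.
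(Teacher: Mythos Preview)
Your ``only if'' argument is fine and is exactly what the paper does (it is Lemma~\ref{derivX} with $m=1$): one spatial derivative costs $t^{-1/4}$ via the semigroup factorization and Young's inequality.

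The gap is in the ``if'' direction. After your depth-one divergence representation $\partial_t^kW_{t/2}=\operatorname{div}\Phi_{t/2}$ and the integration by parts, you arrive at $\partial_t^kW_tf=-\sum_i\Phi^{(i)}_{t/2}\ast W_{t/2}(\partial_{x_i}f)$, and Young's inequality then requires a bound on $\|W_{t/2}(\partial_{x_i}f)\|_\infty$. But membership of $\partial_{x_i}f$ in the \emph{homogeneous} space $\dot\Lambda^{\Delta^2}_{(\alpha-1)/4}$ gives you control only on $\|\partial_t^{k'}W_t(\partial_{x_i}f)\|_\infty$ with $k'\ge1$, not on $\|W_t(\partial_{x_i}f)\|_\infty$ itself; the latter is in general infinite (take $\partial_{x_i}f(x)=|x|^{\beta}$ with $0<\beta<1$). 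Your attempt to fix this by ``applying $\partial_t^{k-1}$ appropriately'' does not close: if instead you factor as $(\partial_tW_{t/2})\ast(\partial_t^{k-1}W_{t/2}f)$ and put the divergence on the first factor, you need $k-1\ge[(\alpha-1)/4]+1$ to invoke the hypothesis, which fails precisely in the main range $1<\alpha\le4$ (where $k=1$).

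The missing idea is to use the equation itself. Since $\partial_tW_t=-\Delta^2W_t$, one $t$-derivative is four $x$-derivatives:
\[
\partial_tW_tf=-\sum_{i_1,i_2}\partial_{x_{i_1}}^2\partial_{x_{i_2}}^2W_tf=-\sum_{i_1,i_2}\partial_{x_{i_1}}^2\partial_{x_{i_2}}\,W_t(\partial_{x_{i_2}}f).
\]
Now three of the four spatial derivatives stay on the kernel side, and Lemma~\ref{derivX} applied to $g=\partial_{x_{i_2}}f\in\dot\Lambda^{\Delta^2}_{(\alpha-1)/4}$ (with $m=3$, $j=0$ when $1<\alpha<4$, and with the obvious iteration for $\alpha\ge4$) gives exactly $t^{-1+\alpha/4}$. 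In other words, the correct ``divergence representation'' is of depth four, furnished for free by the PDE, not of depth one.
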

With the spaces $\dot{\Lambda}_{\alpha/4}^{{\Delta^2}}$ defined in Definition \ref{defclasica}, we can get the boundedness of some operators associated to $\Delta^2$  in the homogeneous  Lipschitz spaces. The operators we will consider are defined as follows.

\begin{itemize}
\item The {\it fractional integral} of order $\beta>0$,
	  $$
(\Delta^2)^{-\beta/4}f(x)=\frac{1}{\Gamma(\beta/4)}\int_0^\infty e^{-s\Delta^2}f(x) s^{\beta/4}\frac{ds}{s}.
	  $$
\item The {\it fractional biharmonic operator} of order $\beta>0$,
	   $$
(\Delta^2)^{\beta/4}f(x)= \frac1{c_\beta} \int_0^\infty
	   (e^{- s\Delta^2}-Id)^{[\beta/4]+1}f(x)\,\frac{ds}{s^{1+\beta/4}}.
	   $$	
	   {\item The first order Riesz transforms  defined by
	  $$\mathcal{R}_i=\partial_{x_i}(\Delta^2)^{-1/4}
\hbox{  and } \,  R_i = (\Delta^2)^{-1/4}\partial_{x_i},  \: i=1,\cdots,n. $$}
\end{itemize}

\begin{thm}\label{negativepower}
	Let $\alpha,\beta>0$. Then, $(\Delta^2)^{-\beta/4}$  satisfies
		$$\|(\Delta^2)^{-\beta/4} f\|_{\dot{\Lambda}_{\frac{\alpha+\beta}{4}}^{\Delta^2}}\le C \|f\|_{\dot{\Lambda}_{\alpha/4}^{{\Delta^2}}}.$$
	  \end{thm}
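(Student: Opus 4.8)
The plan is to work directly from Definition \ref{defclasica} of the homogeneous space $\dot{\Lambda}_{\alpha/4}^{\Delta^2}$, controlling the quantity $\partial_t^k W_t \big((\Delta^2)^{-\beta/4}f\big)$ for $k = [(\alpha+\beta)/4]+1$. The key structural fact is that $(\Delta^2)^{-\beta/4}$ commutes with the heat semigroup $W_t = e^{-t\Delta^2}$, so that
$$
\partial_t^k W_t\big((\Delta^2)^{-\beta/4}f\big) = \frac{1}{\Gamma(\beta/4)} \int_0^\infty \partial_t^k W_{t+s} f \; s^{\beta/4}\,\frac{ds}{s},
$$
using the semigroup property $e^{-s\Delta^2}W_t = W_{t+s}$. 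Thus the whole problem reduces to an integral estimate in which the only input is the hypothesis $f \in \dot{\Lambda}_{\alpha/4}^{\Delta^2}$, i.e. $\|\partial_\tau^m W_\tau f\|_\infty \le C\|f\|_{\dot\Lambda_{\alpha/4}^{\Delta^2}} \tau^{-m+\alpha/4}$ for $m = [\alpha/4]+1$.

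First I would split the $s$-integral at $s = t$. For the local part $0 < s < t$, I would write $\partial_t^k W_{t+s} = \partial_t^{k-m}\big(\partial_t^m W_{t+s}\big)$ and apply the hypothesis with $\tau = t+s \sim t$, giving a bound of order $(t+s)^{-(k-m)}(t+s)^{-m+\alpha/4} \sim t^{-k+\alpha/4}$; integrating $s^{\beta/4}\,ds/s$ over $(0,t)$ contributes a factor $t^{\beta/4}$, yielding the desired $t^{-k+(\alpha+\beta)/4}$ after checking that the exponent arithmetic works out (this is where one must be careful that $k-m \ge 0$ and that $k$ is the correct integer for the target smoothness index $(\alpha+\beta)/4$; note $k - m$ may exceed what a single application of the hypothesis controls, so one iterates or uses that extra $t$-derivatives of $W_\tau f$ only improve decay). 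For the global part $s > t$, I would instead put all derivatives on the kernel side: using $\|\partial_\tau^j W_\tau h\|_\infty \lesssim \tau^{-j+\text{(index)}}$ type bounds derived from the hypothesis applied at an intermediate time, estimate $\|\partial_t^k W_{t+s}f\|_\infty \lesssim (t+s)^{-k+\alpha/4} \sim s^{-k+\alpha/4}$ for $s > t$, and then integrate $s^{-k+\alpha/4} s^{\beta/4}\,ds/s$ over $(t,\infty)$; convergence at $\infty$ holds precisely because $k = [(\alpha+\beta)/4]+1 > (\alpha+\beta)/4$, so the exponent $-k + (\alpha+\beta)/4 < 0$, and the integral evaluates to a constant times $t^{-k+(\alpha+\beta)/4}$.

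The main obstacle I anticipate is not any single estimate but the bookkeeping needed to pass from the hypothesis — which gives a bound only for the specific order $m = [\alpha/4]+1$ of $t$-derivative — to bounds for all the intermediate orders of $t$-derivatives of $W_\tau f$ that appear when $k > m$. The clean way around this is a lemma (likely already established in the paper's proof of Theorem \ref{nuevoStein} or Theorem \ref{thm:charclassic}, or provable by the same semigroup interpolation / reproduction-formula argument) stating that membership in $\dot{\Lambda}_{\alpha/4}^{\Delta^2}$ is equivalent to $\|\partial_t^j W_t f\|_\infty \lesssim t^{-j+\alpha/4}$ for \emph{every} integer $j > \alpha/4$, not just $j = [\alpha/4]+1$; one direction is trivial (more derivatives, write $\partial_t^{j} = \partial_t^{j-m}\partial_t^m$ and use $\|\partial_t^{j-m}W_{t/2}\|_{L^1\to L^1}\lesssim t^{-(j-m)}$ on the semigroup), and that is all that is needed here. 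Once that normalization is in hand, the two-piece integral split above closes the estimate, and the resulting constant depends only on $\alpha$, $\beta$, $n$. Finally, since the statement concerns semi-norms only, no $L^\infty$-control of $(\Delta^2)^{-\beta/4}f$ itself is required, which is exactly why the homogeneous setting is the natural one — the operator $(\Delta^2)^{-\beta/4}$ need not even be defined on $L^\infty(\real^n)$.
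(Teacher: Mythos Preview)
Your proposal is correct and follows essentially the same route as the paper: write $\partial_t^\ell W_t\big((\Delta^2)^{-\beta/4}f\big)=\int_0^\infty \partial_w^\ell W_w f\big|_{w=t+s}\,s^{\beta/4}\,\tfrac{ds}{s}$ with $\ell=[(\alpha+\beta)/4]+1$, invoke the higher-order lemma (Proposition~\ref{subirk}) to bound $\|\partial_w^\ell W_w f\|_\infty\le C\,S_\alpha[f]\,w^{-\ell+\alpha/4}$, and estimate the resulting scalar integral. The paper handles that integral by the single substitution $u=s/t$ rather than splitting at $s=t$, but this is only a cosmetic difference---your split is exactly the verification that $\int_0^\infty u^{\beta/4-1}(1+u)^{-\ell+\alpha/4}\,du<\infty$.
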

\begin{thm}[H\"older estimates]\label{Holderestimates}
	  	Let $0< \beta < \alpha $ and  $f\in	 \dot{\Lambda}_{\alpha/4}^{{\Delta^2}}.$  Then,
	  $$	\| (\Delta^2)^{\beta/4} f\|_{\dot \Lambda_{{\frac{\alpha-\beta}{4}}}^{\Delta^2}}\le C \|f\|_{\dot{\Lambda}_{\alpha/4}^{{\Delta^2}}}.
	  	$$
	  \end{thm}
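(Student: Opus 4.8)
The plan is to exploit the semigroup definition of the homogeneous Lipschitz spaces together with the characterization in Theorem \ref{nuevoStein} (which identifies $\dot\Lambda_{\alpha/4}^{\Delta^2}$ with the second-difference H\"older condition $N_\alpha[\cdot]<\infty$ when $0<\alpha<2$), reducing everything to estimates on $\partial_t^k W_t$ applied to $(\Delta^2)^{\beta/4}f$. First I would write, for $k=[(\alpha-\beta)/4]+1$,
\begin{equation*}
\partial_t^k W_t\bigl((\Delta^2)^{\beta/4}f\bigr)(x)=\frac1{c_\beta}\int_0^\infty \partial_t^k W_t\bigl((e^{-s\Delta^2}-Id)^{[\beta/4]+1}f\bigr)(x)\,\frac{ds}{s^{1+\beta/4}},
\end{equation*}
using that $W_t$ and the biharmonic heat semigroup commute (both are Fourier multipliers) and that differentiation in $t$ passes under the integral once the requisite bounds are in place. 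The key point is that $\partial_t^k W_t\,(e^{-s\Delta^2}-Id)^{m}$, with $m=[\beta/4]+1$, should be controlled by splitting the $s$-integral at $s=t$: for $s\le t$ one uses that $(e^{-s\Delta^2}-Id)^m$ contributes a factor $s^m$ when paired with $\partial_t^{k}W_t=\partial_t^{k-1}(\Delta^2 W_t)\cdots$ acting on $f$, so that $\partial_t^{k+m}W_{t+\theta s}$-type kernels give $t^{-k-m+\alpha/4}$ (by the hypothesis $f\in\dot\Lambda_{\alpha/4}^{\Delta^2}$ combined with semigroup interpolation, i.e. writing $m$ of the derivatives as $s$-increments and noting $\|\partial_t^{k+m}W_tf\|_\infty\le C t^{-(k+m)+\alpha/4}$ which follows from Definition \ref{defclasica} and an elementary induction on the order of the derivative); for $s\ge t$ one simply bounds $(e^{-s\Delta^2}-Id)^m$ crudely and uses $\|\partial_t^k W_t(\Delta^2)^{\text{something}}f\|_\infty$ estimates. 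Collecting, the $s\le t$ part yields $\int_0^t s^{m}\,t^{-k-m+\alpha/4}\,s^{-1-\beta/4}\,ds\sim t^{-k+\alpha/4}\int_0^t s^{m-1-\beta/4}ds\sim t^{-k+(\alpha-\beta)/4}$ since $m-\beta/4>0$, and the $s\ge t$ part yields $\int_t^\infty t^{-k+\alpha/4}\,s^{-1-\beta/4}\,ds\sim t^{-k+(\alpha-\beta)/4}$ since $\beta/4>0$; both match the required exponent $-k+(\alpha-\beta)/4$.

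The main technical obstacle will be making the ``semigroup interpolation'' step rigorous: one needs that membership in $\dot\Lambda_{\alpha/4}^{\Delta^2}$, defined only via the single derivative order $k=[\alpha/4]+1$, actually forces $\|\partial_t^j W_tf\|_\infty\le C t^{-j+\alpha/4}$ for every integer $j\ge k$ (and indeed the intermediate control needed to split $(e^{-s\Delta^2}-Id)^m$ into increments). This is the analogue of the standard fact for classical Lipschitz/Hölder spaces and should follow by writing $\partial_t^j W_t f=\partial_t^{j-k}W_{t/2}\bigl(\partial_t^k W_{t/2}f\bigr)$ and using the uniform $L^\infty\to L^\infty$ bounds for $t^{j-k}\partial_t^{j-k}W_{t/2}$ (a consequence of the explicit form $W_t(x)=t^{-n/4}g(x/t^{1/4})$ and the Schwartz-type decay of $g$ and its derivatives). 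A secondary point requiring care is the convergence of the defining integral for $(\Delta^2)^{\beta/4}f$ and the justification of differentiating under the integral sign; here the same split at $s=t$ controls the tail, while near $s=0$ the factor $s^{m-1-\beta/4}$ is integrable precisely because $m=[\beta/4]+1>\beta/4$.

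Finally I would record the norm equivalence: the computation above shows $S_{\alpha-\beta}[(\Delta^2)^{\beta/4}f]\le C\,S_\alpha[f]$, which is exactly $\|(\Delta^2)^{\beta/4}f\|_{\dot\Lambda_{(\alpha-\beta)/4}^{\Delta^2}}\le C\|f\|_{\dot\Lambda_{\alpha/4}^{\Delta^2}}$ by Definition \ref{defclasica}. An alternative, and perhaps cleaner, route for the range where Theorem \ref{nuevoStein} applies ($0<\alpha<2$, hence $0<\alpha-\beta<2$) is to verify directly that $N_{\alpha-\beta}[(\Delta^2)^{\beta/4}f]<\infty$ using the pointwise second-difference characterization of the source space; but the semigroup computation has the advantage of covering all $\alpha$ uniformly, so I would present that as the primary argument and invoke Theorem \ref{nuevoStein} only for the concluding remarks.
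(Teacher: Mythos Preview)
Your overall strategy coincides with the paper's: write $\partial_t^m W_t\bigl((\Delta^2)^{\beta/4}f\bigr)$ via the singular integral in $s$, convert $(e^{-s\Delta^2}-Id)^{\ell}$ (with $\ell=[\beta/4]+1$) into an iterated integral of $\partial_\nu^\ell W_\nu$, combine with $\partial_t^m W_t$ via the semigroup law to obtain $\partial_\nu^{m+\ell}W_\nu f\big|_{\nu=t+s_1+\cdots+s_\ell}$, invoke the higher-order bound $\|\partial_\nu^{m+\ell}W_\nu f\|_\infty\le C\,\nu^{-(m+\ell)+\alpha/4}$ (your ``semigroup interpolation'', which is exactly Proposition~\ref{subirk}), and split the $s$-integral at $s=t$. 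Your treatment of the region $s\le t$ and your justification that $m+\ell\ge[\alpha/4]+1$ are both on target.

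The gap is in the region $s\ge t$. Your proposal is to ``bound $(e^{-s\Delta^2}-Id)^{\ell}$ crudely'' and arrive at the integrand $t^{-m+\alpha/4}$ (with your $k=m=[(\alpha-\beta)/4]+1$). That bound would require $\|\partial_t^m W_t f\|_\infty\le C\,t^{-m+\alpha/4}$, but $m$ can be strictly smaller than $[\alpha/4]+1$ (e.g.\ $\alpha=5$, $\beta=2$ gives $m=1<2=[\alpha/4]+1$), and Proposition~\ref{subirk} gives you nothing below the threshold order. The paper avoids this by \emph{keeping} the $(m+\ell)$-th derivative form on $s\ge t$ as well: expanding $(e^{-s\Delta^2}-Id)^\ell$ binomially (equivalently, evaluating the iterated $s_i$-integrals) yields
\[
\int_t^\infty \sum_{j=0}^{\ell}\frac{C_j}{(t+js)^{\,m-\alpha/4}}\,\frac{ds}{s^{1+\beta/4}},
\]
and each term is handled by noting $t\le t+js\le(1+\ell)s$ together with a case split on the sign of $m-\alpha/4$; the key integrability at infinity is $m-\alpha/4+\beta/4>0$, i.e.\ $m>(\alpha-\beta)/4$, which holds by definition of $m$. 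Replace your ``crude'' step by this computation and the argument goes through.
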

		\vskip 0.3cm
	  	  \begin{thm} \textcolor{white}{} \label{TRiesz}
	  	\begin{itemize}
	  		\item [(1)] For {$0<\alpha\le 1$,}  then $\|\mathcal{R}_if\|_{\dot{\Lambda}_{\alpha/4}^{{\Delta^2}}} \le C \|f\|_{\dot{\Lambda}_{\alpha/4}^{{\Delta^2}}} $,  $i=1,\dots,n$.
		\item [(2)] For  {$1<\alpha\le 2$,}  then $\|{R}_if\|_{\dot{\Lambda}_{\alpha/4}^{{\Delta^2}}} \le C \|f\|_{\dot{\Lambda}_{\alpha/4}^{{\Delta^2}}} $,  $i=1,\dots,n$.
	  		\end{itemize}	
	  		  \end{thm}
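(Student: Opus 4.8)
The plan is to realize each Riesz transform as a composition of two operators whose mapping properties on the homogeneous Lipschitz scale are already in hand — the fractional integral $(\Delta^2)^{-1/4}$ from Theorem \ref{negativepower} and first order differentiation from Theorem \ref{thm:LipDirivative} — and to check that the regularity indices produced along the way stay inside the admissible ranges of those theorems. Since $\partial_{x_i}$ commutes with $e^{-s\Delta^2}$, and hence with $(\Delta^2)^{-1/4}$, the operators $\mathcal{R}_i$ and $R_i$ coincide wherever both make sense; the role of the case split is that $\mathcal{R}_i f=\partial_{x_i}(\Delta^2)^{-1/4}f$ is the appropriate reading when $f$ need not be differentiable ($0<\alpha\le 1$), whereas $R_i f=(\Delta^2)^{-1/4}\partial_{x_i}f$ is appropriate when $f$ is already differentiable ($1<\alpha\le 2$).

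For (1) I would argue as follows. Given $0<\alpha\le 1$ and $f\in\dot\Lambda^{\Delta^2}_{\alpha/4}$, Theorem \ref{negativepower} with $\beta=1$ shows that $g:=(\Delta^2)^{-1/4}f$ is well defined and lies in $\dot\Lambda^{\Delta^2}_{(\alpha+1)/4}$ with $\|g\|_{\dot\Lambda^{\Delta^2}_{(\alpha+1)/4}}\le C\|f\|_{\dot\Lambda^{\Delta^2}_{\alpha/4}}$. As $\alpha+1>1$, Theorem \ref{thm:LipDirivative} applies to $g$ at the exponent $\alpha+1$ and gives $\partial_{x_i}g\in\dot\Lambda^{\Delta^2}_{\alpha/4}$ with $\sum_{i=1}^{n}\|\partial_{x_i}g\|_{\dot\Lambda^{\Delta^2}_{\alpha/4}}\sim\|g\|_{\dot\Lambda^{\Delta^2}_{(\alpha+1)/4}}$. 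Composing, $\mathcal{R}_i f=\partial_{x_i}g$ obeys $\|\mathcal{R}_i f\|_{\dot\Lambda^{\Delta^2}_{\alpha/4}}\le C\|f\|_{\dot\Lambda^{\Delta^2}_{\alpha/4}}$, which is the claim.

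For (2) I would reverse the order. Given $1<\alpha\le 2$ and $f\in\dot\Lambda^{\Delta^2}_{\alpha/4}$, Theorem \ref{thm:LipDirivative} (valid since $\alpha>1$) tells us that $\partial_{x_i}f$ is well defined, belongs to $\dot\Lambda^{\Delta^2}_{(\alpha-1)/4}$, and satisfies $\|\partial_{x_i}f\|_{\dot\Lambda^{\Delta^2}_{(\alpha-1)/4}}\le C\|f\|_{\dot\Lambda^{\Delta^2}_{\alpha/4}}$; in particular $R_i f=(\Delta^2)^{-1/4}\partial_{x_i}f$ is meaningful. Since $\alpha-1>0$, Theorem \ref{negativepower} with $\beta=1$ applied to $\partial_{x_i}f$ yields $(\Delta^2)^{-1/4}\partial_{x_i}f\in\dot\Lambda^{\Delta^2}_{((\alpha-1)+1)/4}=\dot\Lambda^{\Delta^2}_{\alpha/4}$ with the corresponding bound, and chaining the two estimates gives $\|R_i f\|_{\dot\Lambda^{\Delta^2}_{\alpha/4}}\le C\|f\|_{\dot\Lambda^{\Delta^2}_{\alpha/4}}$.

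The steps above are essentially bookkeeping; the part that needs a little care is justifying that $\partial_{x_i}(\Delta^2)^{-1/4}=(\Delta^2)^{-1/4}\partial_{x_i}$ on the functions in play, i.e. differentiating under the integral sign in the subordination formula defining $(\Delta^2)^{-1/4}$ and using that $\partial_{x_i}$ commutes with $e^{-s\Delta^2}$, together with checking the a priori convergence of the defining integrals on $\dot\Lambda^{\Delta^2}_{\alpha/4}$ — but these require no estimate beyond what is already available, so I expect this to be the only real obstacle. As an alternative one could bypass Theorem \ref{thm:LipDirivative} by using Theorem \ref{nuevoStein} to identify $\dot\Lambda^{\Delta^2}_{\alpha/4}$ with $\{N_\alpha<\infty\}$ for $0<\alpha<2$ and then invoking the classical Privalov-type boundedness of the Euclidean Riesz transforms, since $\mathcal{R}_i$, $R_i$ and the classical Riesz transform all carry the Fourier multiplier $i\xi_i/|\xi|$; the composition argument is, however, shorter and stays within the semigroup framework of the paper.
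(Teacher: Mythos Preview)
Your proposal is correct and follows essentially the same route as the paper: for (1) apply Theorem \ref{negativepower} with $\beta=1$ to land in $\dot\Lambda^{\Delta^2}_{(\alpha+1)/4}$ and then Theorem \ref{thm:LipDirivative} to differentiate back down to $\dot\Lambda^{\Delta^2}_{\alpha/4}$; for (2) reverse the order. The paper's proof is just these two lines without the surrounding commentary; your remarks on commutation, convergence of the defining integrals, and the alternative via Theorem \ref{nuevoStein} are additional care that the paper omits.
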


\begin{remark}
If we note that $(\Delta^2)^{-\beta/4}=\Delta^{-\beta/2}$, $(\Delta^2)^{\beta/4}=\Delta^{\beta/2}$, $\mathcal{R}_i=\partial_{x_i}(\Delta^2)^{-1/4}=\partial_{x_i}\Delta^{-1/2}$ and $R_i = (\Delta^2)^{-1/4}\partial_{x_i}$ for $i=1,\cdots, n,$ then  Theorems \ref{negativepower}--\ref{TRiesz} reveal  the regularity of the above  operators associated to $\Delta$ in the homogeneous Lipschitz spaces associated to $\Delta^2.$
\end{remark}

	Also, we can get the following result related to the Laplace transform.
	  \begin{thm}\label{multiplicador}
	  	Let $a$ be a compactly supported bounded measurable function on $[0,\infty)$ and consider
	  	$$
	  	m(\lambda)=\lambda\int_0^\infty e^{-s\lambda}a(s)ds, \, \, \lambda >0.
	  	$$
	 Then, the multiplier operator of the Laplace transform type $m(\Delta^2)$ is bounded in $L^2(\real^n),$ and,  for every $\alpha>0$,  $m(\Delta^2)$ is bounded from  $\dot{\Lambda}_{\alpha/4}^{{\Delta^2}}$ into itself.
	  \end{thm}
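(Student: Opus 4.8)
The plan is to treat the two assertions separately. For the boundedness on $L^2(\real^n)$, I would first observe that the symbol $m$ is a bounded function on $(0,\infty)$: since $\int_0^\infty \lambda e^{-s\lambda}\,ds = 1$ for every $\lambda>0$, we get $|m(\lambda)| \le \|a\|_\infty\,\lambda\int_0^\infty e^{-s\lambda}\,ds = \|a\|_\infty$. Because $\Delta^2$ is, via the Fourier transform, multiplication by $|\xi|^4$, the spectral theorem (Plancherel's identity) then immediately yields $\|m(\Delta^2)f\|_{L^2} \le \|m\|_\infty\|f\|_{L^2} \le \|a\|_\infty\|f\|_{L^2}$.

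For the regularity estimate on $\dot\Lambda_{\alpha/4}^{\Delta^2}$, I would work directly with the quantity defining the semi-norm, namely $\|\partial_t^k W_t(m(\Delta^2)f)\|_{L^\infty}$ with $k=[\alpha/4]+1$. The heart of the matter is an identity obtained by commuting $m(\Delta^2)$ past $W_t$ (both are functions of $\Delta^2$) and then using $\Delta^2 e^{-s\Delta^2}=-\partial_s e^{-s\Delta^2}$, $\partial_t^k e^{-t\Delta^2}=(-\Delta^2)^k e^{-t\Delta^2}$ and the semigroup property $e^{-s\Delta^2}e^{-t\Delta^2}=e^{-(s+t)\Delta^2}$; this gives
$$\partial_t^k W_t(m(\Delta^2)f)(x) = -\int_0^\infty \big(\partial_u^{k+1}W_u f\big)(x)\big|_{u=s+t}\,a(s)\,ds.$$
As with the operators studied in the previous sections, $m(\Delta^2)f$ on the homogeneous space is understood modulo polynomials, i.e. through its action under $\partial_t^k W_t$, so that the right-hand side above is taken as its definition; its absolute convergence is guaranteed by the estimate in the next step.

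The remaining ingredient is the bound: if $f\in\dot\Lambda_{\alpha/4}^{\Delta^2}$ then $\|\partial_u^{k+1}W_uf\|_{L^\infty}\le C\,u^{-(k+1)+\alpha/4}\,S_\alpha[f]$. I would obtain it by splitting $W_u=W_{u/2}W_{u/2}$ and writing $\partial_u^{k+1}W_uf=\big[(-\Delta^2)W_{u/2}\big]\big[\partial_w^kW_wf|_{w=u/2}\big]$, then using the kernel estimate $\|\partial_vW_v(\cdot)\|_{L^1(\real^n)}\le Cv^{-1}$ (so that $\Delta^2W_v$ is bounded on $L^\infty$ with norm $\le Cv^{-1}$), which is already available from the analysis behind Theorem \ref{thm:charclassic}, together with the defining bound $\|\partial_w^kW_wf\|_{L^\infty}\le Cw^{-k+\alpha/4}S_\alpha[f]$. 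Plugging this into the identity above and bounding $|a(s)|\le\|a\|_\infty$ yields
$$\|\partial_t^kW_t(m(\Delta^2)f)\|_{L^\infty} \le C\,\|a\|_\infty\,S_\alpha[f]\int_0^\infty (s+t)^{-(k+1)+\alpha/4}\,ds.$$
Since $k=[\alpha/4]+1$, the exponent $-(k+1)+\alpha/4$ lies in $[-2,-1)$, so the integral equals $(k-\alpha/4)^{-1}\,t^{-k+\alpha/4}$, and we conclude $\|\partial_t^kW_t(m(\Delta^2)f)\|_{L^\infty}\le C\,t^{-k+\alpha/4}S_\alpha[f]$, that is, $\|m(\Delta^2)f\|_{\dot\Lambda_{\alpha/4}^{\Delta^2}}\le C\|f\|_{\dot\Lambda_{\alpha/4}^{\Delta^2}}$.

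I expect the principal difficulty to be not the estimates themselves — the semigroup identities and the higher-derivative bound are routine given the kernel properties of $W_t$ — but the bookkeeping needed to make the argument rigorous on the homogeneous space: specifying in what sense $m(\Delta^2)f$ is defined when $f$ is merely in $\dot\Lambda_{\alpha/4}^{\Delta^2}$ (and hence possibly unbounded, with the operator defined only up to polynomials), and justifying both the interchange of $m(\Delta^2)$ with $\partial_t^kW_t$ and the exchange of the $s$-integral with the $t$-derivatives. This is precisely where the hypotheses that $a$ is compactly supported and bounded enter, via the absolute convergence provided by the exponent $-(k+1)+\alpha/4<-1$.
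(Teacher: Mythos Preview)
Your proposal is correct and follows essentially the same approach as the paper: the paper also writes $\partial_t^k W_t m(\Delta^2)f(x)=\int_0^\infty \partial_u^{k+1}W_uf(x)\big|_{u=t+s}\,a(s)\,ds$, invokes the bound $\|\partial_u^{k+1}W_uf\|_\infty\le C\,S_\alpha[f]\,u^{-(k+1)+\alpha/4}$ (stated there as Proposition~\ref{subirk}, proved exactly as you sketch), and integrates in $s$; the $L^2$ part is done in the paper via the abstract spectral resolution rather than Plancherel, but the content is the same boundedness $|m(\lambda)|\le\|a\|_\infty$. Your remarks about the role of the compact support of $a$ in justifying the interchange of integral and derivative match the paper's one-line appeal to Fubini and the kernel estimates.
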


The organization of the paper is the following.   In Section \ref{Sec:2}, we prove the characterization of the classical Lipschitz spaces related to $\Delta^2$  and the boundedness of the Bessel potentials on this kind of Lipschitz spaces.  In Section \ref{Sec:4}, we will prove the characterization of the homogeneous Lipschitz spaces related to $\Delta^2$  and the boundedness of the fractional integrals, fractional powers, Reisz transforms and the multiplier of the Laplace transforms on the homogeneous Lipschitz spaces  associated to $\Delta^2$.

Along this paper, we will use the variable constant convention, in which $C$ denotes a
constant that may not be the same in each appearance. The constant will be written with
subindexes if we need to emphasize the dependence on some parameters.

\bigskip
\section{Proof of Theorems \ref{thm:charclassic} and \ref{Schau} }\label{Sec:2}

\setcounter{equation}{0}

In this section, we will give the proof of Theorems \ref{thm:charclassic} and \ref{Schau}. Firstly, we  list some kernel estimations for the heat kernel $W_t(x)$ of the semigroup
$e^{-t\Delta^2}$.

\begin{lem}\rm{(See \cite{Davies} and \cite[Lemma 2.4]{KochL})} \label{le2.2}
For $x \in \real^n$ and $t>0$, the following estimates hold:
 \begin{itemize}
\item[(i)]
\begin{equation*}
\abs{W_t(x)}\leq Ct^{-n/4}e^{{-c \frac{\abs{x}^{4/ 3}}{t^{1/3}}}},\ \  c={3\cdot 2^{1/ 3}\over 16},
\end{equation*}

\item[(ii)]
\begin{equation*}
 \abs{\partial_t^l \nabla^k W_t(x)}\leq C\left( t^{\frac{1}{4}}+\abs{x} \right)^{-n-k-4l}e^{{-c' \frac{\abs{x}^{4/ 3}}{t^{1/ 3}}}},~~~~~ \forall k, l\ge 1, c'<c,
\end{equation*}

\item[(iii)]
\begin{equation*}
 \norm{\partial_t^l \nabla^k W_t(\cdot)}_{L^1(\real^n)}\leq Ct^{-l-{k\over 4}},~~~~~ \forall k, l\ge 1,
\end{equation*}

\item[(iv)] there exist $C, C'>0$ such that for $0\le j\le 4,$
\begin{equation*}
 \abs{\nabla^j W_t(x)} \leq Ce^{-C'\abs{x}},~ ~ ~ ~ ~ \forall (x,t)\in \real^n\times (0,\ 1)\backslash\Big(B_2\times(0, {1\over 2})\Big).
\end{equation*}
 \end{itemize}
\end{lem}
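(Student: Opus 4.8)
The plan is to derive all four estimates from the scaling identity $W_t(x)=t^{-n/4}g(x/t^{1/4})$, where $g$ is the profile function defined in the introduction, together with known pointwise bounds for $g$ and its derivatives. First I would record, as the fundamental input, the decay estimate for the kernel of the biharmonic heat semigroup in the form $|\partial_t^l\nabla^k g(\xi)|\le C_{k,l}(1+|\xi|)^{-n-k}e^{-c|\xi|^{4/3}}$ (with the exponent $c=3\cdot 2^{1/3}/16$ sharp for $l=k=0$), which is exactly the content of the references \cite{Davies} and \cite[Lemma 2.4]{KochL}; this is a classical higher-order analogue of the Gaussian bound and follows from the representation of $g$ via the oscillatory/Bessel integral and a steepest descent / saddle point analysis. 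Once this is available, (i) is immediate: substitute $\xi=x/t^{1/4}$ and use $|\xi|^{4/3}=|x|^{4/3}/t^{1/3}$, so $|W_t(x)|=t^{-n/4}|g(x/t^{1/4})|\le C t^{-n/4}e^{-c|x|^{4/3}/t^{1/3}}$.

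For (ii), I would differentiate the scaling identity. Each $\partial_t$ acting on $W_t(x)=t^{-n/4}g(x/t^{1/4})$ produces, by the chain rule, a factor $t^{-1}$ times a combination of $g$ and its derivatives evaluated at $x/t^{1/4}$ (the homogeneity degree bookkeeping gives the $-4l$ in the exponent of $t^{1/4}+|x|$ after one converts powers of $t^{-1/4}$ into the mixed quantity $(t^{1/4}+|x|)^{-1}$ using $t^{1/4}\le t^{1/4}+|x|$ and, on the region where the Gaussian-type factor is not yet decaying, $|x|\lesssim t^{1/4}$). Similarly each spatial gradient contributes a factor $t^{-1/4}$ and a derivative of $g$. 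Collecting: $|\partial_t^l\nabla^k W_t(x)|\le C t^{-n/4-l-k/4}\,(1+|x|/t^{1/4})^{-n-k}e^{-c'|x|^{4/3}/t^{1/3}}$, and rewriting $t^{-n/4-k/4}(1+|x|/t^{1/4})^{-n-k}=(t^{1/4}+|x|)^{-n-k}\cdot t^{-l+\text{(nonneg)}}$ — more precisely one checks directly that $t^{-n/4-l-k/4}(1+|x|t^{-1/4})^{-n-k}=t^{-l}(t^{1/4}+|x|)^{-n-k}$ — which is exactly the claimed bound; the slightly smaller constant $c'<c$ absorbs the polynomial prefactors that appear when one trades part of the exponential for powers. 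Estimate (iii) then follows by integrating (ii) in $x$: change variables $x=t^{1/4}\xi$, giving $\|\partial_t^l\nabla^k W_t\|_{L^1}=t^{-l-k/4}\|\partial_t^l\nabla^k g\text{-type profile}\|_{L^1}$, and the profile is integrable by the $e^{-c'|\xi|^{4/3}}$ decay; equivalently, differentiate under the integral sign in $\widehat{W_t}(\xi)=e^{-t|\xi|^4}$ and use that $e^{-s\Delta^2}$ is a contraction semigroup to control the $t$-derivatives, but the scaling computation is cleaner. Finally (iv) is the restriction of (i)–(ii) (for $0\le j\le 4$, allowing $l=0$) to the region $(x,t)\in\real^n\times(0,1)\setminus(B_2\times(0,1/2))$: there either $|x|\ge 2$ or $t\ge 1/2$, so in both subcases $t^{1/4}+|x|$ is bounded below and the ratio $|x|^{4/3}/t^{1/3}$ is bounded below by $c''\,|x|$ for $t<1$ (using $t^{1/3}<1$), which upgrades the stretched-exponential decay to the plain exponential $e^{-C'|x|}$ with a harmless constant, while the polynomial prefactor $(t^{1/4}+|x|)^{-n-j}$ is bounded.

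The only genuine obstacle is the base case $|\partial_t^l\nabla^k g(\xi)|\le C(1+|\xi|)^{-n-k}e^{-c|\xi|^{4/3}}$ with the correct exponent, which is precisely what is cited from \cite{Davies} and \cite{KochL}; everything downstream is elementary scaling and calculus. I would therefore organize the write-up as: (1) quote the profile estimate as the starting point; (2) prove (i) by substitution; (3) prove (ii) by the chain-rule homogeneity computation, being careful with the conversion of $t$-powers into $(t^{1/4}+|x|)$-powers and with the loss $c\to c'$; (4) deduce (iii) by an $L^1$ change of variables; (5) deduce (iv) by restricting to the stated region and converting stretched-exponential to exponential decay. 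One caveat worth flagging in the text: the statement of (ii) as written requires $k,l\ge 1$, but the argument in fact delivers the bound for all $l\ge 1$ and $k\ge 0$ (and for $l=0$, $k\ge 1$ one has instead $|\nabla^k W_t(x)|\le C(t^{1/4}+|x|)^{-n-k}e^{-c'|x|^{4/3}/t^{1/3}}$, which is what (iv) uses with $l=0$).
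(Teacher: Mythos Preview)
The paper does not prove this lemma; it is simply quoted from \cite{Davies} and \cite[Lemma~2.4]{KochL}, so there is no in-paper argument to compare against. Your plan is a sound reconstruction of how such bounds are obtained: the scaling identity $W_t(x)=t^{-n/4}g(x/t^{1/4})$ reduces everything to decay of the profile $g$ and its derivatives, and (i)--(iv) then follow by bookkeeping.

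Two small points to clean up. First, writing ``$\partial_t^l\nabla^k g(\xi)$'' is a slip, since $g$ does not depend on $t$; what you need as input is a bound of the form $|\nabla^m g(\xi)|\le C_m\,e^{-c|\xi|^{4/3}}$ for all $m\ge 0$ (this is what the cited references give), and then you use $\partial_t W_t=-\Delta^2 W_t$ to convert $t$-derivatives into spatial derivatives of order $4l$. Second, the identity $t^{-n/4-l-k/4}(1+|x|t^{-1/4})^{-n-k}=t^{-l}(t^{1/4}+|x|)^{-n-k}$ is correct but is \emph{not} ``exactly the claimed bound'': the stated estimate in (ii) has $(t^{1/4}+|x|)^{-n-k-4l}$, and to pass from your expression to that one you must absorb the factor $t^{-l}(t^{1/4}+|x|)^{4l}=(1+|x|/t^{1/4})^{4l}$ into the stretched exponential, which costs exactly the weakening $c\to c'$ you already flagged. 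With those two clarifications the argument goes through; (iii) and (iv) follow from (ii) as you describe.
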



\begin{remark}
We observe that, because of the estimate {\rm(iii)} in Lemma \ref{le2.2}, we can assume  $t<1$ in Definition \ref{Lipdef1}.
\end{remark}
The following proposition shows that we can use any bigger integer than $[\alpha/4]+1$ in Definition \ref{Lipdef1}.
\begin{prop}\label{subiraK}
		Let {$\alpha>0$}. A function   $f\in \Lambda_{\alpha/4}^{\Delta^2}$ if, and only if,   for   $m \ge [\alpha/4]+1$, we have $\| \partial_ t^{m} {W}_t f\|_ {L^\infty(\mathbb{R}^n)} \le C_m t^{-m+ \alpha/4}$ and $f\in L^\infty(\real^n).$
	\end{prop}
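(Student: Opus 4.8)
The plan is to prove the equivalence by two implications. The "only if" direction is the routine one: assuming $f \in \Lambda_{\alpha/4}^{\Delta^2}$ with $k = [\alpha/4]+1$, I would write, for any $m > k$, $\partial_t^m W_t f = \partial_t^{m-k} (\partial_t^k W_{t} f)$ and split the time interval, e.g. $\partial_t^m W_t f(x) = \partial_s^{m-k}\big|_{s = t/2} \big( \partial_\tau^k W_\tau f \big)_{\tau = s + t/2}$, i.e. use the semigroup property $W_t = W_{t/2} \circ W_{t/2}$ to write $\partial_t^m W_t f = \partial_t^{m-k} W_{t/2} \big( \partial_t^k W_{t/2} f \big)$, differentiate the first factor and estimate $\|\partial_t^{m-k} W_{t/2}(\cdot)\|_{L^1(\real^n)} \le C t^{-(m-k)}$ by Lemma \ref{le2.2}(iii), while $\|\partial_t^k W_{t/2} f\|_\infty \le C t^{-k + \alpha/4}$ by hypothesis. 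Multiplying gives $\|\partial_t^m W_t f\|_\infty \le C t^{-m + \alpha/4}$, which is the claim; $f \in L^\infty$ is part of the hypothesis, so there is nothing more to check.

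The substantive direction is "if". Here I assume $f \in L^\infty(\real^n)$ and $\|\partial_t^m W_t f\|_\infty \le C_m t^{-m+\alpha/4}$ for some fixed $m > k := [\alpha/4]+1$, and I must recover $\|\partial_t^k W_t f\|_\infty \le C t^{-k+\alpha/4}$. The idea is to integrate back down in $t$, one order at a time. Write, for $0 < t < 1$,
\begin{equation*}
\partial_t^{m-1} W_t f(x) = \partial_t^{m-1} W_t f(x)\Big|_{t=1} - \int_t^1 \partial_s^m W_s f(x)\, ds.
\end{equation*}
The boundary term at $t=1$ is controlled by $\|f\|_\infty$ together with Lemma \ref{le2.2}(iii) (indeed $\|\partial_t^{m-1} W_1\|_{L^1} \le C$), and the integral is bounded by $C_m \int_t^1 s^{-m+\alpha/4}\, ds$. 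Since $m - 1 \ge k \ge [\alpha/4]+1$, we have $-m + \alpha/4 < -1$, so the integral is $\le C t^{-(m-1)+\alpha/4}$, and the boundary term $C$ is dominated by $C t^{-(m-1)+\alpha/4}$ for $t<1$ as well. Hence $\|\partial_t^{m-1} W_t f\|_\infty \le C t^{-(m-1)+\alpha/4}$ for $0<t<1$, and for $t \ge 1$ the same bound follows trivially from Lemma \ref{le2.2}(iii) and $f \in L^\infty$. Iterating this descent $m-k$ times brings us down to the exponent $k$, which is exactly the defining estimate of $\Lambda_{\alpha/4}^{\Delta^2}$.

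The one point requiring care — and the main obstacle — is the convergence of the integral $\int_t^1 s^{-j+\alpha/4}\, ds$ at each stage: this is harmless precisely as long as the exponent $-j + \alpha/4$ stays strictly below $-1$, i.e. as long as $j \ge [\alpha/4]+1$, which is why the descent can be run all the way down to $j = k$ but no further, and it is also why the statement is phrased with $m \ge [\alpha/4]+1$ rather than an arbitrary integer. A secondary technical point is justifying differentiation under the integral sign and the fundamental-theorem-of-calculus step in $t$; this is standard given the pointwise kernel bounds in Lemma \ref{le2.2}(ii)–(iv), which guarantee that $t \mapsto \partial_t^j W_t f(x)$ is smooth on $(0,\infty)$ with locally integrable derivatives, uniformly in $x$. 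Collecting the estimates yields the stated $L^\infty$ bound with a constant depending on $m$ (and $\alpha$, $n$), completing the proof. \cqd
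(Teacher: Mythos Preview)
Your proof is correct and follows essentially the same strategy as the paper: the semigroup splitting $W_t = W_{t/2}W_{t/2}$ plus the $L^1$ kernel bound for the forward direction, and repeated integration in $t$ for the converse. The paper's converse is slightly more streamlined --- it integrates from $t$ to $\infty$ rather than from $t$ to $1$, invoking (via Lemma~\ref{le2.2}(ii)) that $\partial_t^{m} W_t f(x)\to 0$ as $t\to\infty$, which eliminates your boundary term at $t=1$ and the separate treatment of $t\ge 1$.
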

	\begin{proof}  Let $m \ge  [\alpha/4]+1=k$.
By Lemma  \ref{le2.2}, we get $\displaystyle \abs{\int_{\mathbb{R}^n} \partial_t^k W_t(x,y) dy} \le \frac{C}{t^k}.$ Then, by the semigroup property we have
		\begin{eqnarray*}
 \Big| \partial_t^{m}  W_t f(x) \Big|=  C \Big| \partial_t^{m-k}W_{t/2}\left(\partial_u^{k}W_{u}f(x)\big|_{u=t/2}\right) \Big|
			\le  C_\alpha'\frac1{t^{m-k}} t^{-k +\alpha/4} = C_m t^{-m+\alpha/4}.
		\end{eqnarray*}
		 For the converse, the fact    $| \partial_t^{m} W_t f(x) | \to 0 \text{ as } t\to\infty,$ see (ii) in Lemma  \ref{le2.2}, allows us to integrate  on $t$ as many times as we need to  get $\| \partial_t^{k}W_t f\|_ {L^\infty(\mathbb{R}^n)} \le C_\alpha\, t^{-k+ \alpha/4}.$		
	\end{proof}


 Now, we can prove our first main theorem.
We should remember the following Bochner's subordination formula
\begin{equation*}
e^{-t\sqrt{\lambda} } = \frac{1}{2\sqrt{\pi}}\int_0^\infty\frac{te^{-\frac{t^2}{4\tau}}}{\tau^{3/2}} e^{-\tau \lambda }d\tau
=\frac{1}{\sqrt{\pi}}\int_0^\infty\frac{e^{-\tau}}{\tau^{1/2}} e^{-\frac{t^2}{4\tau} }d\tau, \quad \lambda >0.
\end{equation*}
Given a positive operator $L$, the above formula  gives a good representation of the Poisson semigroup associated to the operator $L$. In general, the Poisson semigroup is written as  $e^{-t\sqrt{L}}$. This procedure could be iterated, and then we can get the semigroup
$e^{-t \sqrt{\sqrt{L}}}$.  In the case of $\Delta^2$, it seems natural that
$e^{-t \sqrt{\sqrt{\Delta^2}}}$ gives the classical Poisson semigroup. In the proof of Theorem \ref{thm:charclassic}, we shall use this idea in a fundamental way.

 \begin{proof}[Proof of Theorem \ref{thm:charclassic}]
 Since $0<\alpha<2,$ we have $[\alpha/4]+1=[\alpha/2]+1=1$.

 ``$(1)\Rightarrow (2).$" Let  $f\in \Lambda_{\alpha/4}^{\Delta^2}$.    By \cite[Lemma 5 in pp. 145]{Stein}, it is enough to prove that $$\|\partial_t^{2}{e}^{-t\sqrt{-\Delta} } f\|_\infty\le Ct^{-2+\alpha}.$$
Since $\displaystyle \partial_t^2\left( \frac{te^{-\frac{t^2}{4\tau}}}{\tau^{3/2}}\right)=\partial_\tau \left(\frac{t e^{-\frac{t^2}{4\tau}}}{\tau^{3/2}}\right)$,  the Bochner's subordination formula and integration by parts give
			\begin{align*}
			&|\partial_t^{2}e^{-t\sqrt {-\Delta}} f(x)|=\left| \frac{1}{2{\pi}}\int_0^\infty\partial_t^{2}\left(\frac{te^{-\frac{t^2}{4\tau}}}{\tau^{3/2}}\right) \int_0^\infty \frac{e^{-u}}{u^{1/2}}W_{\frac{\tau^2}{4u}}f(x)dud\tau\right|\\
&=	\left| \frac{1}{2{\pi}}\int_0^\infty\partial_\tau\left(\frac{te^{-\frac{t^2}{4\tau}}}{\tau^{3/2}}\right) \int_0^\infty \frac{e^{-u}}{u^{1/2}}W_{\frac{\tau^2}{4u}}f(x)dud\tau\right|\\
&=\left| \frac{1}{2{\pi}}\int_0^\infty  \frac{te^{-\frac{t^2}{4\tau}}}{\tau^{3/2}}  \int_0^\infty \frac{e^{-u}}{u^{1/2}}\partial_s W_{s}f(x)\big|_{s=\frac{\tau^2}{4u}}\ {\tau\over 2u}dud\tau\right|\\
& \le C \,S_\alpha [f] \int_0^\infty \frac{t e^{-\frac{t^2}{4\tau}}}{\tau^{3/2}}\int_0^\infty \frac{e^{-u}}{u^{1/2}}{\tau\over 2u} \left({\tau^2\over 4u}\right)^{-1+\alpha/4}dud\tau\\
			&\le C\,S_\alpha[f] t^{-2+\alpha}.
			\end{align*}

``$(2)\Rightarrow (3).$" It is already proved in  \cite[Lemma 5 and Proposition 8 in Chapter V]{Stein}.

``$(3)\Rightarrow (1).$" Since  $$\displaystyle  \int_{\mathbb{R}^{n}} \partial_y {W}_t(y)f(x+y) dy=\int_{\mathbb{R}^{n}} \partial_t {W}_t(-y)f(x-y)dy=\int_{\mathbb{R}^{n}} \partial_t {W}_t(y)f(x-y)dy$$
				and  $$\displaystyle \int_{\real^n}\partial_t {W}_t(y)dy=0,$$ we have
				\begin{align*}
				\abs{\partial_t {W}_tf(x)} &= \left|\frac{1}{2}\int_{\mathbb{R}^{n}}\partial_t {W}_t(y)(f(x-y)+f(x+y)-2f(x)) dy\right| \\&\le C N_\alpha[f]\,  \int_{\mathbb{R}^n}\frac{e^{-c\frac{|y|^{4/3}}{t^{1/3}}}}{t^{{n}/{4}+1}}|y|^\alpha dy\le C N_\alpha[f]\,   t^{-1+\alpha/4}.
				\end{align*}
We complete the proof of Theorem \ref{thm:charclassic}.		
 \end{proof}

\begin{remark}\label{rem:all}
We should note that the proof of ``$(1)\Rightarrow (2)$" in the above theorem is valid for any $\alpha>0.$
\end{remark}

Now, we are in a position to prove Theorem \ref{Schau}.

	 \begin{proof} [{\it Proof of Theorem \ref{Schau}}] We prove only (i), estimate (ii) can be proved analogously.
Firstly, since $f\in L^\infty(\real^n)$,  we get
\begin{align*}
	 & \abs{(Id+\Delta^2)^{-\beta/4} f(x)}=\Big| \frac{1}{\Gamma(\beta/4)}\int_0^\infty e^{-s}W_s f(x) s^{\beta/4}\frac{ds}{s}\Big|\\
&\le C\, \|f\|_{L^\infty(\real^n)}\,   \int_0^{\infty} e^{-s} s^{\beta/4}\frac{ds}{s}\le C\, \|f\|_{L^\infty(\real^n)}, \: \  \forall  x\in\real^n.
	  \end{align*}

Secondly,   Fubini's Theorem allows us to get $$\displaystyle W_t((Id+{\Delta^2})^{-\beta/4}f)(x)=
	 \frac{1}{\Gamma(\beta/4)}\int_0^\infty e^{-s} W_t(W_s f)(x) s^{\beta/4}\frac{ds}{s}.$$
For  $j=[\alpha/4]+1$ , by the semigroup property,  we have
	 \begin{multline*}
 \int_0^\infty \Big|e^{-s} \partial^j_t W_t(W_s f)(x) \Big|s^{\beta/4}\frac{ds}{s}
	 =  \int_0^\infty \Big| e^{-s} \partial^j_w W_w f(x)\big|_{w=t+s} \Big|s^{\beta/4}\frac{ds}{s}\\
 \le C S_\alpha[f] \int_0^\infty e^{-s}  (t+s)^{-j+\alpha/4} s^{\beta/4}\frac{ds}{s}.
	 \end{multline*}
{The function in the last integral can be bounded by a uniform (in a neighborhood of $t$) integrable function  (of $s$).}  This means that  we can interchange the derivative with respect to $t$ and the integral with respect to $s$ in the above expression.
	
	  Let $\ell=[\alpha/4+\beta/4]+1$.  By iterating  the above arguments and using the hypothesis, we have
	  	\begin{align*}
	  	&|\partial_t^\ell W_t ((Id+{\Delta^2})^{-\beta/4} f)(x)|=\left|\frac{1}{\Gamma(\beta/4)}\int_0^\infty e^{-s}\partial_t^\ell W_t(W_s f)(x) s^{\beta/4}\frac{ds}{s}\right|\\
	  	&\le {C}\int_0^\infty e^{-s}\abs{\partial_w^\ell W_w f(x)\big|_{w=t+s}}s^{\beta/4}\frac{ds}{s}
		\\&\le {C \ S_\alpha[f]\,}\int_0^\infty e^{-s}(t+s)^{-\ell+\alpha/4}s^{\beta/4}\frac{ds}{s}
	  	\\&\stackrel{\frac{s}{t}=u}{\le }C\, S_\alpha[f]\,{t^{\alpha/4+\beta/4-\ell}}\int_0^\infty \frac{u^{\beta/4}e^{-tu}}{(1+u)^{\ell-\alpha/4}}\frac{du}{u}
	  	\\&\le C\ S_\alpha[f] \, t^{\alpha/4+\beta/4-\ell}.
	  	\end{align*}		
We end the proof of the theorem.
	  \end{proof}

\vskip 0.8cm
	  \section{Proof of Theorems \ref{nuevoStein}--\ref{multiplicador} }\label{Sec:4}

In this section, we will give the proof of Theorems \ref{nuevoStein}--\ref{multiplicador} related with the homogeneous Lipschitz spaces associated to $\Delta^2$.

First, following the proof of Proposition \ref{subiraK}, we can get a  proposition as follows.

\begin{prop}\label{subirk}
		Let {$\alpha>0$}. A function   $f\in \dot \Lambda_{\alpha/4}^{{\Delta^2}}$ if, and only if,   for  all $m \ge [\alpha/4]+1$, we have $\| \partial_ t^{m} {W}_t f\|_ {L^\infty(\mathbb{R}^n)} \le C_m t^{-m+ \alpha/4}$.
	\end{prop}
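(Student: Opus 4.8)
The plan is to mimic the proof of Proposition \ref{subiraK} exactly, simply omitting the clause $f\in L^\infty(\real^n)$, which played no role in the chain of estimates there. So suppose first that $f\in\dot\Lambda_{\alpha/4}^{\Delta^2}$, i.e. $\|\partial_t^k W_tf\|_{L^\infty}\le C_\alpha t^{-k+\alpha/4}$ with $k=[\alpha/4]+1$, and let $m\ge k$. Writing $\partial_t^m W_tf = C\,\partial_t^{m-k}W_{t/2}\bigl(\partial_u^k W_u f\big|_{u=t/2}\bigr)$ by the semigroup property, I would use estimate (iii) of Lemma \ref{le2.2}, namely $\|\partial_t^{m-k}W_{t/2}(\cdot)\|_{L^1}\le C\,t^{-(m-k)}$, together with Young's inequality, to bound this by $C\,t^{-(m-k)}\cdot(t/2)^{-k+\alpha/4}=C_m t^{-m+\alpha/4}$. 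Note that when $m=k$ the term $\partial_t^{m-k}W_{t/2}$ is just $W_{t/2}$, whose $L^1$ norm is bounded by a constant, so the estimate still holds.

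For the converse, suppose $\|\partial_t^m W_tf\|_{L^\infty}\le C_m t^{-m+\alpha/4}$ holds for some $m\ge k=[\alpha/4]+1$; I want to deduce the same inequality with $m$ replaced by $k$. The key observation is that $\partial_t^m W_tf(x)\to 0$ as $t\to\infty$: this follows from estimate (ii) of Lemma \ref{le2.2} (and the hypothesized bound itself, since $-m+\alpha/4<0$), which lets me integrate in $t$ from $+\infty$. Concretely, $\partial_t^{m-1}W_tf(x) = -\int_t^\infty \partial_s^m W_sf(x)\,ds$, and $\int_t^\infty s^{-m+\alpha/4}\,ds = C\,t^{-(m-1)+\alpha/4}$ because $-m+\alpha/4<-1$ (as $m\ge [\alpha/4]+1$ forces $-m+\alpha/4 < -\alpha/4 + \alpha/4 \le 0$, and more precisely $m-\alpha/4 > 1$). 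Iterating this $m-k$ times brings the order of the derivative down to $k$ and yields $\|\partial_t^k W_tf\|_{L^\infty}\le C_\alpha\,t^{-k+\alpha/4}$, which is exactly membership in $\dot\Lambda_{\alpha/4}^{\Delta^2}$ (with control of the semi-norm).

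The only genuinely delicate point, and the one I would be careful to check, is the convergence of the tail integrals $\int_t^\infty s^{-j+\alpha/4}\,ds$ at each stage of the iteration: this requires $-j+\alpha/4<-1$, i.e. $j>1+\alpha/4$, for every intermediate order $j$ ranging over $k+1,\dots,m$. Since $k=[\alpha/4]+1\ge \alpha/4+? $ — more carefully, $k = [\alpha/4]+1 > \alpha/4$, so $k - \alpha/4 > 0$; but we need strictly more than $1$. In fact $k+1 = [\alpha/4]+2 \ge \alpha/4 + 1$, and since $[\alpha/4]+1 > \alpha/4$ we get $[\alpha/4]+2 > \alpha/4+1$, so $j\ge k+1$ gives $j - \alpha/4 > 1$, and the tail integrals converge; the same bookkeeping as in Proposition \ref{subiraK} applies verbatim. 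Since the argument never invokes $f\in L^\infty$, it transfers directly to the homogeneous setting, and the proof is complete.

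\begin{proof}
The argument is identical to that of Proposition \ref{subiraK}, with the clause ``$f\in L^\infty(\real^n)$'' removed throughout; it is never used in the estimates.

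Let $k=[\alpha/4]+1$ and $m\ge k$. If $f\in\dot\Lambda_{\alpha/4}^{{\Delta^2}}$, then by the semigroup property and (iii) of Lemma \ref{le2.2} (with the convention that for $m=k$ the operator $\partial_t^{m-k}W_{t/2}$ is simply $W_{t/2}$, of bounded $L^1$-norm),
\[
\bigl|\partial_t^m W_tf(x)\bigr|
= C\bigl|\partial_t^{m-k}W_{t/2}\bigl(\partial_u^k W_u f\,\big|_{u=t/2}\bigr)(x)\bigr|
\le C\,t^{-(m-k)}\,(t/2)^{-k+\alpha/4}
= C_m\,t^{-m+\alpha/4}.
\]

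Conversely, suppose $\|\partial_t^m W_tf\|_{L^\infty(\mathbb{R}^n)}\le C_m t^{-m+\alpha/4}$ for some $m\ge k$. Since $m\ge[\alpha/4]+1$, each intermediate order $j\in\{k+1,\dots,m\}$ satisfies $j-\alpha/4>1$, so the bound $|\partial_t^j W_tf(x)|\le C\,t^{-j+\alpha/4}$ together with (ii) of Lemma \ref{le2.2} guarantees $\partial_t^j W_tf(x)\to 0$ as $t\to\infty$ and that
\[
\partial_t^{j-1}W_tf(x) = -\int_t^\infty \partial_s^{j}W_sf(x)\,ds,
\qquad
\Bigl|\partial_t^{j-1}W_tf(x)\Bigr|\le C\int_t^\infty s^{-j+\alpha/4}\,ds = C\,t^{-(j-1)+\alpha/4}.
\]
Iterating this from $j=m$ down to $j=k+1$ yields $\|\partial_t^k W_tf\|_{L^\infty(\mathbb{R}^n)}\le C_\alpha\,t^{-k+\alpha/4}$, i.e. $f\in\dot\Lambda_{\alpha/4}^{{\Delta^2}}$, with the corresponding control of the semi-norm.
\end{proof}
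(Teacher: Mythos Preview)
Your proposal is correct and follows exactly the approach the paper takes: the paper's own ``proof'' of Proposition~\ref{subirk} is simply the sentence ``following the proof of Proposition~\ref{subiraK}'', and your write-up carries that out verbatim, dropping the unused $L^\infty$ clause and adding the explicit check that the tail integrals $\int_t^\infty s^{-j+\alpha/4}\,ds$ converge for $j\ge k+1$.
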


And, we give a lemma which will be used later.

\begin{lem}\label{derivX}
		Let $\alpha>0$ and $k=[\alpha/4]+1$. If $ \left\|\partial_t^k	{W}_t f \right\|_{L^\infty(\mathbb{R}^{n})}\leq C_\alpha t^{-k+\alpha/4}$,  then for every $j,m\in\mathbb N$ such that ${m}/{4}+j\ge k$,  there exists a constant $C_{m,j}>0$ such that
		$$
		\left\| {\partial_{x_i}^m\partial_t^j {W}_tf}\right\|_{\infty}\le C_{m,j,\alpha}	t^{-(m/4+j)+\alpha/4}, \text{ for every } i=1\dots,n.
		$$
		\end{lem}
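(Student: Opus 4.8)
The plan is to trade away both families of derivatives and reduce everything to the single hypothesis $\|\partial_t^k W_t f\|_{\infty}\le C_\alpha t^{-k+\alpha/4}$, using the semigroup property for what it can handle and an integration in $t$ for the rest; this is the scheme that underlies the proof of Proposition \ref{subiraK}. The basic identity, obtained by writing $W_t=W_{t/2}\circ W_{t/2}$ and sliding $k$ time--derivatives onto the $f$--factor, is, for $j\ge k$,
\begin{equation*}
\partial_{x_i}^m\partial_t^{j}W_t f(x)=c_{j,k}\int_{\real^n}\big(\partial_{x_i}^m\partial_s^{j-k}W_s\big)(x-y)\big|_{s=t/2}\,\big(\partial_u^{k}W_u f\big)(y)\big|_{u=t/2}\,dy ,
\end{equation*}
with $c_{j,k}$ a harmless constant.

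Granting this, the case $j\ge k$ is immediate: the $f$--factor is bounded in $L^\infty$ by $C(t/2)^{-k+\alpha/4}$ by hypothesis, while $\big\|(\partial_{x_i}^m\partial_s^{j-k}W_s)|_{s=t/2}\big\|_{L^1(\real^n)}\le C\,t^{-(j-k)-m/4}$ --- this is Lemma \ref{le2.2}(iii) when $m\ge 1$, $j-k\ge 1$, and the two endpoints ($m=0$, which is Proposition \ref{subirk}, and $j=k$ with $m\ge 1$) follow from the scaling $W_s(x)=s^{-n/4}g(x/s^{1/4})$ and the integrability of $g$ and its derivatives, itself a consequence of Lemma \ref{le2.2}(i). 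Since $\|K*h\|_{\infty}\le\|K\|_{L^1}\|h\|_{\infty}$, this gives $t^{-(m/4+j)+\alpha/4}$. The remaining range is $0\le j<k$, which, because $m/4+j\ge k$, forces $m\ge 4(k-j)\ge 4$; here I would descend in $j$, starting from the case $j=k$ just obtained, which gives $\|\partial_{x_i}^m\partial_t^{k}W_t f\|_{\infty}\le C\,t^{-(m/4+k)+\alpha/4}$, and then integrating the identity $\partial_t\big(\partial_{x_i}^m\partial_t^{\ell-1}W_t f\big)=\partial_{x_i}^m\partial_t^{\ell}W_t f$ in $t$, a total of $k-j$ times. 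Each integration raises the exponent of $t$ by one, so after $k-j$ steps one reaches $-(m/4+j)+\alpha/4$; and at every intermediate level $\ell$, $j<\ell\le k$, the integrand is $O\big(s^{-(m/4+\ell)+\alpha/4}\big)$ with exponent $<-1$, because $m+4\ell-4\ge 4(k-j)+4(j+1)-4=4k>\alpha$ --- this is exactly where the relation $\alpha<4k$, i.e. $k=[\alpha/4]+1$, and the assumption $m\ge 4(k-j)$ come in.

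The step I expect to cost the most is making this last argument rigorous --- that is, controlling the constants of integration at each of the $k-j$ steps. Following the Remark after Lemma \ref{le2.2} it is enough to work with $t\in(0,1)$, so the integration can be performed from $t$ up to $1$; the boundary terms $\partial_{x_i}^m\partial_t^{\ell-1}W_t f\big|_{t=1}$ are then fixed quantities, and are absorbed into the estimate because on $(0,1)$ the factor $t^{-(m/4+\ell-1)+\alpha/4}$, having negative exponent, is bounded below by $1$ --- provided those boundary terms are finite, for which one invokes the decay of the kernels $\partial_t^l\nabla^k W_t$ in Lemma \ref{le2.2}(ii). The only other ingredient, entirely routine, is the endpoint bound $\|\partial_{x_i}^m W_s\|_{L^1(\real^n)}\le C\,s^{-m/4}$ already mentioned, obtained by scaling.
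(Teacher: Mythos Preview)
Your approach is essentially that of the paper: the same case split $j\ge k$ versus $j<k$; for $j\ge k$ the semigroup factorization $W_t=W_{t/2}\circ W_{t/2}$, placing $k$ time-derivatives on the $f$-factor and the rest on the kernel, then the $L^1$--$L^\infty$ convolution inequality with Lemma~\ref{le2.2}; and for $j<k$ the reduction to the level $\ell=k$ already obtained, followed by $k-j$ integrations in $t$.

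The one point where you diverge from the paper is the upper limit of those integrations. The paper integrates from $t$ to $\infty$, relying on $\partial_{x_i}^m\partial_t^{\ell}W_tf\to 0$ as $t\to\infty$ whenever $m/4+\ell\ge k$; your variant integrates from $t$ to $1$ and restricts attention to $t\in(0,1)$. The Remark you invoke for this restriction, however, concerns Definition~\ref{Lipdef1} --- the \emph{inhomogeneous} space, where $f\in L^\infty$ makes the large-$t$ estimate automatic via Lemma~\ref{le2.2}(iii). The present lemma sits in the homogeneous setting and carries no size assumption on $f$, so that Remark does not apply; and the lemma genuinely must hold for all $t>0$ (it is invoked in the proof of Theorem~\ref{nuevoStein} with $t=|y|^4$ for arbitrary $y\in\real^n$, and again throughout the proof of Theorem~\ref{thm:LipDirivative}). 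For the same reason, the finiteness of the boundary values $\partial_{x_i}^m\partial_t^{\ell-1}W_tf\big|_{t=1}$ cannot be read off from Lemma~\ref{le2.2}(ii) alone --- that kernel bound has to be paired with some integrability or growth control on $f$, which is not part of the hypothesis. Integrating to $\infty$ as the paper does sidesteps both issues and delivers the estimate for every $t>0$ in one stroke.
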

		
	\begin{proof}If $j\ge k$, by the semigroup property we get
		\begin{align*}
		\left|\partial_{x_i}^m\partial_t^j{W}_tf(x)\right|&=C  \left|\int_{\real^n} \partial_{x_i}^m\partial_v^{j-k}{W}_{v}(x-z)\big|_{v=t/2}\partial_u^k {W}_uf(z)\big|_{u=t/2}dz\right|\\
		&\le  \frac{C_{m,j,\alpha} \|\partial_u^k {W}_uf\big|_{u=t/2}\|_\infty}{t^{{m}/{4}+j-k}} \int_{\real^n}\frac{e^{-c\frac{|x-y|^{4/3}}{t^{1/3}}}}{t^{n/4}}dy\\
		&\le C_{m,j,\alpha}	t^{-({m}/{4}+j)+\alpha/4}, \:\;\quad  x\in\real^n.
		\end{align*}
 If $j<k$, by proceeding as before we get  $	 \left|\partial_{x_i}^m\partial_t^k {W}_tf(x)\right|\le  C	 t^{-({m}/{4}+k)+\alpha/4}$, $ x\in\real^n$, and  we  get the result by integrating the previous estimate $k-j$ times, since  $|\partial_{x_i}^m\partial_t^\ell{W}_tf(x)|\to 0$ as $t\to \infty$ as far as ${m}/{4}+\ell \ge k$. 	
 	\end{proof}

Then, we can prove the  pointwise characterization theorem.

		\begin{proof}[Proof of Theorem \ref{nuevoStein}]
			Let $x\in\real^n$ and $f\in \dot\Lambda_{\alpha/4}^{{\Delta^2}}$. We can write, for every $t>0$, $y\in\real^n$,
				\begin{align*}
				|f(x+y)&+f(x-y)-2f(x)|\le |{W}_tf(x+y)-f(x+y)|+|{W}_tf(x-y)-f(x-y)|\\&\quad \quad+2|{W}_tf(x)-f(x)|+ |{W}_tf(x+y)-{W}_tf(x) +{W}_tf(x-y)-{W}_tf(x)|.
				\end{align*}
We have
				$$
				|{W}_tf(x)-f(x)|=\left|\int_0^t \partial_u {W}_uf(x)du \right|\le C{S}_\alpha[f]\int_0^t u^{-1+\alpha/4}du =C {S}_\alpha[f] t^{\alpha/4}.$$
		In a parallel way, we can handle the two first	summands.  Regarding the last summand,  by using the chain rule, Lemmas \ref{le2.2} and \ref{derivX} we have \begin{align*}
&\abs{{W}_tf(x+y)-{W}_tf(x) +{W}_tf(x-y)-{W}_tf(x)}\\
&=\left|\int_{0}^1\partial_{\theta}\left({W}_tf(x+\theta y)+{W}_tf(x-\theta y)\right) d\theta  \right|\\
&=\left|\int_0^1 \left(\nabla_u {W}_tf(u){\big|_{u=x+\theta y}}\cdot y-\nabla_v {W}_tf(v){\big|_{v=x-\theta y}}\cdot y\right)d\theta\right|\\
&=  \left|\int_0^1\int_{-1}^1\partial_\lambda\nabla_u {W}_tf(u){\big|_{u=x+\lambda\theta y}}\cdot y\,d\lambda d\theta\right|\\&=\left|\int_0^1\int_{-1}^1\nabla_u^2 {W}_tf(u){\big|_{u=x+\lambda\theta y}}\cdot\theta |y|^2\,d\lambda d\theta\right|\\&\le C{S}_\alpha[f] \, t^{-1/2+\alpha/4}|y|^2. \:\: \end{align*}
				 	Thus, by choosing $t=|y|^4$ we get what we wanted.
					
					For the converse, we assume that $ N_\alpha[f]<\infty$. Since  $$\displaystyle  \int_{\mathbb{R}^{n}} \partial_y {W}_t(y)f(x+y) dy=\int_{\mathbb{R}^{n}} \partial_t {W}_t(-y)f(x-y)dy=\int_{\mathbb{R}^{n}} \partial_t {W}_t(y)f(x-y)dy$$
				and  $$\displaystyle \int_{\real^n}\partial_t {W}_t(y)dy=0,$$ we have
				\begin{align*}
				\abs{\partial_t {W}_tf(x)} &= \left|\frac{1}{2}\int_{\mathbb{R}^{n}}\partial_t {W}_t(y)(f(x-y)+f(x+y)-2f(x)) dy\right| \\&\le C N_\alpha[f]\,  \int_{\mathbb{R}^n}\frac{e^{-c\frac{|y|^{4/3}}{t^{1/3}}}|y|^\alpha}{t^{{n}/{4}+1}}dy\le C N_\alpha[f]\,   t^{-1+\alpha/4}.
				\end{align*}		
		\end{proof}

Now we can prove Theorem \ref{thm:LipDirivative}.
\begin{proof}[Proof of Theorem \ref{thm:LipDirivative}]
Let us prove the necessity part first. 
Consider the case $1<\alpha<4$ first.  By Lemma \ref{derivX}(with $j=1,\ m=1$), we have
\begin{equation*}\label{equ:est1}
		\left\| {\partial_t  {W}_t(\partial_{x_i}f)}\right\|_{\infty}=\left\| {\partial_t \partial_{x_i} {W}_tf}\right\|_{\infty}\le C	 \norm{f}_{\dot \Lambda^{\Delta^2}_{\alpha/4}}\, t^{-1+(\alpha-1)/4},
	\end{equation*} for every $ i=1,\cdots,n$.
 Then, we know that $\partial_{x_i}f\in {\dot\Lambda^{\Delta^2}_{(\alpha-1)/4}}$ for every $i=1,\cdots, n.$
When $4n'\le \alpha<4n'+4,\ n'\in \mathbb N,$ then $k=[\alpha/4]+1=n'+1.$ By Lemma \ref{derivX}(with $j=n',\ m=1$), we get
\begin{equation*}
		\left\| {\partial_t^{n'} {W}_t(\partial_{x_i}f)}\right\|_{\infty}=\left\| {\partial_t^{n'} \partial_{x_i} {W}_tf}\right\|_{\infty}\le C	 \norm{f}_{\dot \Lambda^{\Delta^2}_{\alpha/4}}\, t^{-n'+(\alpha-1)/4},
	\end{equation*} for every $ i=1, \cdots, n$. So, by Proposition \ref{subirk},  $\partial_{x_i}f\in \dot \Lambda^{\Delta^2}_{(\alpha-1)/4}.$

For the converse, assume that $\partial_{x_i}f\in \dot\Lambda^{\Delta^2}_{(\alpha-1)/4}$, $i=1, \cdots, n,$ we want to prove that $f\in \dot \Lambda^{\Delta^2}_{\alpha/4}.$ When $1<\alpha<4,$ $[\alpha/4]+1=1.$ By Lemma \ref{derivX}(with $j=0,\ m=3$), we have
$$\left\| \partial^2_{x_{i_1}} \partial_{x_{i_2}}{W}_t(\partial_{x_{i_2}}f)\right\|_{\infty}\le C	 \norm{\partial_{x_{i_2}}f}_{\dot \Lambda^{\Delta^2}_{(\alpha-1)/4}}\, t^{-1+\alpha/4},$$
for any $i_1, i_2\in \{1,\cdots, n\}.$  We would like to prove that
$$\left\| {\partial_t {W}_tf}\right\|_{\infty}\le C	 t^{-1+\alpha/4}.$$
By \eqref{el.4}, we get
\begin{equation}\label{equ:equation}
\partial_t W_tf=\Delta^{2}W_tf=\sum_{i_1=1}^n\sum_{i_2=1}^n\partial^2_{x_{i_1}}\partial^2_{x_{i_2}}W_tf.
\end{equation}
Hence,
\begin{equation*}
\left\| {\partial_t {W}_tf}\right\|_{\infty}\le \sum_{i_1=1}^n\sum_{i_2=1}^n\left\| \partial^2_{x_{i_1}}\partial^2_{x_{i_2}}W_tf\right\|_{\infty}= \sum_{i_1=1}^n\sum_{i_2=1}^n\left\| \partial^2_{x_{i_1}}\partial_{x_{i_2}}W_t(\partial_{x_{i_2}}f)\right\|_{\infty}
\le C	 t^{-1+\alpha/4}.
\end{equation*}
 We consider the case $\alpha\ge 4.$  By the sufficient part of this theorem,
$$\partial^2_{x_{i_1}}\partial^2_{x_{i_2}}f\in \dot \Lambda^{\Delta^2}_{(\alpha-4)/4},$$
for any $i_1, i_2\in \{1,\cdots, n\}.$ Let $k=[\alpha/4]+1.$  Then, 
\begin{equation}\label{equ:est2}
\left\| {\partial_t^k {W}_t(\partial^2_{x_{i_1}}\partial^2_{x_{i_2}}f)}\right\|_{\infty}\le C	 t^{-k+(\alpha-4)/4}=C	 t^{-(k+1)+\alpha/4},\end{equation}
for any $i_1, i_2\in \{1,\cdots, n\}.$
Therefore, by \eqref{equ:equation} and  \eqref{equ:est2} we have
\begin{align*}
\left\| {\partial^{k+1}_t {W}_tf}\right\|_{\infty}& =\left\| {\partial^k_t \left( \sum_{i_1=1}^n\sum_{i_2=1}^n\partial^2_{x_{i_1}}\partial^2_{x_{i_2}}\right){W}_tf}\right\|_{\infty} \le \sum_{i_1=1}^n\sum_{i_2=1}^n\left\| {\partial_t^k \left( \partial^2_{x_{i_1}}\partial^2_{x_{i_2}}\right){W}_tf}\right\|_{\infty}\\
&= \sum_{i_1=1}^n\sum_{i_2=1}^n\left\| {\partial_t^k {W}_t\left( \partial^2_{x_{i_1}}\partial^2_{x_{i_2}}f\right)}\right\|_{\infty}
\le C	 t^{-(k+1)+\alpha/4}.
\end{align*}
By Proposition \ref{subirk}, $f\in \dot \Lambda^{\Delta^2}_{\alpha/4}$.
 We complete the proof of the theorem.
\end{proof}

Now, we continue the proof of Theorems \ref{negativepower}--\ref{TRiesz}.
\begin{proof}[Proof of Theorem \ref{negativepower}]
By a similar argument as in the proof of Theorem \ref{Schau}, we can give the proof of Theorem \ref{negativepower}. For completeness, we give a sketch proof in the following.
Let $\ell=[\alpha/4+\beta/4]+1$.  We have
	  	\begin{align*}
	  	&|\partial_t^\ell W_t (({\Delta^2})^{-\beta/4} f)(x)|=\left|\frac{1}{\Gamma(\beta/4)}\int_0^\infty \partial_t^\ell W_t(W_s f)(x) s^{\beta/4}\frac{ds}{s}\right|\\
	  	&\le {C}\int_0^\infty \abs{\partial_w^\ell W_w f(x)\big|_{w=t+s}}s^{\beta/4}\frac{ds}{s}
		\\&\le {C \ S_\alpha[f]\,}\int_0^\infty (t+s)^{-\ell+\alpha/4}s^{\beta/4}\frac{ds}{s}
	  	\\&\stackrel{\frac{s}{t}=u}{\le }C\, S_\alpha[f]\,{t^{\alpha/4+\beta/4-\ell}}\int_0^\infty \frac{u^{\beta/4}}{(1+u)^{\ell-\alpha/4}}\frac{du}{u}
	  	\\&\le C\ S_\alpha[f] \, t^{\alpha/4+\beta/4-\ell}.
	  	\end{align*}		
We end the proof of the theorem.
\end{proof}

\begin{proof}[ {\it Proof of Theorem \ref{Holderestimates}.}]
 	  	Let $\ell=[{\beta/ 4}]+1$ and $m= \left[{(\alpha-\beta)}/{4}\right]+1$. Then, $$m+\ell=\left[{(\alpha-\beta)}/{4}\right]+1+[\beta/4]+1 >\alpha/4-\beta/4+\beta/4=\alpha/4.$$ As $m+\ell\in \mathbb{N}$, we get $m+\ell \ge[\alpha/4]+1.$
	  	
	  	By the semigroup property, we have
	  	\begin{align*}
	  	&\Big|\partial_t^m W_t((\Delta^2)^{\beta/4 } f)(x) \Big|=   \frac1{c_\beta}\Big|\partial_t^m W_t \int_0^\infty
	   (e^{- s\Delta^2}-Id)^{[\beta/4]+1}f(x)\,\frac{ds}{s^{1+\beta/4}} \Big|
\\&= C_\beta\Big| \int_0^\infty \partial_t^m W_t \Big( \underbrace{ \int_0^s\cdots \int_0^s }_{\substack{\ell}}\partial_\nu^\ell W_{\nu} |_{\nu= s_1+\cdots+s_\ell} f(x)ds_1\cdots ds_\ell \Big) \frac{ds}{s^{1+\beta/4}} \Big|\\
 &= 	C_\beta\Big|  \int_0^\infty \Big( \underbrace{\int_0^s\cdots \int_0^s}_{\substack{\ell}} \partial_\nu^{m+\ell}W_{\nu} |_{\nu=t+ s_1+\cdots+s_\ell}f(x) ds_1\cdots ds_\ell \Big) \frac{ds}{s^{1+\beta/4}}\Big|
	  	\\ &\le
	  	C_\beta\,  S_\alpha^\L [f] \int_0^\infty \Big( \underbrace{ \int_0^s\cdots \int_0^s }_{\substack{\ell}}(t+s_1+\cdots +s_\ell)^{-(m+\ell) +\alpha/4} ds_1\cdots ds_\ell \Big) \frac{ds}{s^{1+\beta/4}} \\&=
	  	C_\beta\,  S_\alpha [f] \int_0^t ( \cdots )  \frac{ds}{s^{1+\beta/4}}  + C_\beta\,    S_\alpha [f] \int_t^\infty  (\cdots ) \frac{ds}{s^{1+\beta/4}} \\
&=:C_\beta\,    S_\alpha [f]\,(A +B).
	  	\end{align*}
	   Now we shall estimate $A $ and $B$ separately. By changing variables, we have
		  	\begin{align*}
	  	A  &=
	  	C_\beta t^{-m+\alpha/4} \int_0^t  \underbrace{\int_0^{s/t}\cdots \int_0^{s/t}}_{\substack{\ell}} (1+s_1+\cdots +s_\ell)^{-(m+\ell) +\alpha/4} ds_1\cdots ds_\ell \frac{ds}{s^{1+\beta/4}}  \\&\le
	  	C_\beta\, t^{-m+\alpha/4} \int_0^t   \Big(\frac{s}{t}\Big) ^\ell \frac{ds}{s^{1+\beta/4}} = C_\beta\, t^{-m+\alpha/4-\ell} \int_0^t \frac{ds}{s^{1+\beta/4-\ell}}
	  	= C_\beta\,t^{-m+(\alpha-\beta)/4}.
	  	\end{align*}
	   On the other hand,
	  	\begin{align*}
	  	B  &\le  \int_t^\infty  \sum_{j=0}^\ell \frac{C_j}{(t+js)^{m-\alpha/4}} \frac{ds}{s^{1+\beta/4}} =    \sum_{j=0}^\ell \int_t^\infty \frac{C_j}{(t+js)^{m-\alpha/4}} \frac{ds}{s^{1+\beta/4}} \\& \le   \sum_{j=0}^\ell C_j t^{-m+(\alpha-\beta)/4}
\le C t^{-m+(\alpha-\beta)/4}.
	  	\end{align*}
	  The last inequalities is obtained by observing that $ t\le t+js \le (1+\ell) s$ inside the integrals together with the  discussion  about the sign of $m-\alpha/4$. We complete the proof of this theorem.	  	
	  \end{proof}

	\begin{proof}[{\it Proof of Theorem \ref{TRiesz}.}]
	
	  	Let $0<\alpha\le 1$ and $f\in 	\dot \Lambda_{\alpha/4}^{\Delta^2}$. By Theorem \ref{negativepower} we have  $(\Delta^2)^{-1/4}f \in 	\dot \Lambda_{\frac{\alpha+1}{4}}^{\Delta^2}$.  Therefore, by Theorem \ref{thm:LipDirivative}   we get  $\mathcal{R}_if=\partial_{x_i}(\Delta^2)^{-1/4}f\in 	 \dot \Lambda_{{\alpha}/{4}}^{{\Delta^2}}$.
	  	This gives the first statement of the theorem.

	  	Suppose now $1<\alpha\le 2$ and $f\in 	\dot \Lambda_{\alpha/4}^{\Delta^2}$.  Then, by Theorem \ref{thm:LipDirivative}  we have $\partial_{x_i}f\in 	 \dot \Lambda_{\frac{\alpha-1}{4}}^{{\Delta^2}}$.   Again,  by Theorem \ref{negativepower} we get  $R_if=(\Delta^2)^{-1/4}(\partial_{x_i}f)\in \dot \Lambda_{{\alpha}/{4}}^{{\Delta^2}}$.  	
\end{proof}

At last, we give the proof of 	Theorem \ref{multiplicador}.
	 \begin{proof} [{\it Proof Theorem \ref{multiplicador}.}]	
Firstly, we prove that $m(\Delta^2)$ is bounded on $L^2(\real^n)$.
In this proof we shall use  spectral analysis. The reader can find the details about the spectral analysis in  \cite[Ch.~12~and~13]{Rudin}. Since $\Delta^2$ is a nonnegative, densely defined and self-adjoint operator on $L^2(\mathbb{R}^n)$, there is a unique resolution $E$ of the identity such that
\begin{equation*}\label{phi(L)}
e^{-t\Delta^2}=\int_0^\infty e^{-t\lambda}~dE(\lambda).
\end{equation*}
The above identity  is a shorthand notation that means
$$\langle e^{-t\Delta^2}f,g\rangle_{L^2(\mathbb{R}^n)}=\int_0^\infty e^{-t\lambda}~dE_{f,g}(\lambda),\qquad f, g\in L^2(\mathbb{R}^n),$$
where $dE_{f,g}(\lambda)$ is a regular Borel complex measure of bounded variation concentrated on the spectrum of $\Delta^2$, with $d\abs{E_{f,g}}(0,\infty)\leq\norm{f}_{L^2(\real^n)}\norm{g}_{L^2(\real^n)}$.
Therefore, we write
\begin{eqnarray*}
m(\Delta^2) =\int_0^\infty \int_0^\infty -\partial_s e^{-s\lambda}a(s)d s\ d E(\lambda).
\end{eqnarray*}
And we have
\begin{align*}
\abs{\langle m(\Delta^2)f,g\rangle} &\le \Big|\int_0^\infty \int_0^\infty -\partial_s e^{-s\lambda}a(s)d s\ d E_{f,g}(\lambda)\Big| \\
& \le
\|a\|_{\infty}  \int_0^\infty \int_{0}^{\infty} \abs{\partial_s e^{-s\lambda}}ds ~\abs{dE_{f,g}}(\lambda)\\  &\le C \norm{f}_{L^2(\real^{n})} \norm{g}_{L^2(\real^{n})}.
\end{align*}
This means that $$ \norm{m(\Delta^2)f}_{L^2(\real^n)}\le C \norm{f}_{L^2(\real^n)}.$$

	 Now we want to see that $\|\partial_t^kW_t m(\Delta^2)f\|_\infty\le Ct^{-k+\alpha/4}$. Since $a(s)$ is compactly supported,  Fubini's Theorem together with Lemmas \ref{le2.2}  allows us to interchange integral with derivatives and kernels.  By Proposition \ref{subirk}, we have
\begin{align*}
	 &|\partial_t^kW_t m(\Delta^2)f(x)|=\left|\int_0^\infty \partial_u^{k+1}W_uf(x)\big|_{u=t+s}a(s)ds\right|\\
&\le C\|a\|_\infty S_\alpha[f] \int_0^\infty \frac{1}{(t+s)^{k+1-\alpha/4}}ds\\
	 &=C\|a\|_\infty S_\alpha[f]\, t^{-(k+1)+\alpha/4}\int_0^\infty\frac{ t}{(1+r)^{k+1-\alpha/4}}dr\\
&\le C\|a\|_\infty S_\alpha [f] t^{-k+\alpha/4}.
	  \end{align*}
We complete the proof of the theorem.
\end{proof}

\bigskip

\

\bigskip

{\bf Acknowledgments.}
The author is grateful to Professor J.L. Torrea for his helpful discussion on this project and  the Department of Mathematics at Universidad Aut\'onoma de Madrid for its hospitality during the period of this research.

 \bigskip



\end{document}